\documentclass[amscd,amssymb,verbatim,12pt]{amsart}
\usepackage{pdfsync}
\newif\ifAMS
\IfFileExists{amssymb.sty}
  {\AMStrue\usepackage{amssymb}}
  {\usepackage{latexsym}}
\theoremstyle{plain}

\newtheorem{Thm}{Theorem}
\newtheorem{Cor}[Thm]{Corollary}
\newtheorem{Lem}[Thm]{Lemma}
\newtheorem{Prop}[Thm]{Proposition}

\theoremstyle{definition}
\newtheorem*{Def}{Definition}

\theoremstyle{remark}

\newcommand{\interior}{^{ \kern-5pt ^\circ}}
\newcommand {\bd}{\partial}

\newcommand {\iy}{\infty}
\newcommand {\cl}{\overline}

\newcommand {\R}{{\mathbb R}}
\newcommand {\Z}{{\mathbb Z}}

\newcommand {\E}{{\mathbb E}}
\newcommand {\HH}{{\mathbb H}}

\newcommand {\diam}{\text{diam}}

\newcommand {\fp}{\text {Fix}}

\newcommand {\cD}{ {\mathcal  D}}
\newcommand {\cC}{{\mathcal  C}}

\newif\ifPDF
\ifx\pdfoutput\undefined
    \PDFfalse
\else
    \ifnum\pdfoutput > 0
        \PDFtrue
    \else
        \PDFfalse
    \fi
\fi \ifPDF
    \usepackage[pdftex]{graphicx}
    \DeclareGraphicsExtensions{.pdf,.png,.jpg}
    \graphicspath{{}}
\else
    \usepackage{graphicx}
    \DeclareGraphicsExtensions{.eps}
    \graphicspath{{}}
\fi

\usepackage{color}

\begin{document}
\title{Tits rigidity of CAT(0) group boundaries}

\author
{Khek Lun Harold Chao and Eric Swenson }

\subjclass{53C23,20F67,}

\email {khchao@indiana.edu} \email {eric@math.byu.edu}

\address
[Khek Lun Harold Chao] {Department of Mathematics, Indiana University, Bloomington, IN 47405 }
\address
[Eric Swenson] {Mathematics Department, Brigham Young University,
Provo UT 84602}
\thanks {This work was partially supported by a grant from the Simons Foundation (209403)}

\begin{abstract}
We define Tits rigidity for visual boundaries of CAT(0) groups, and prove that the join of two Cantor sets and its suspension are Tits rigid.
\end{abstract}

\maketitle
\section {Introduction}

A CAT(0) space  $X$ has two natural boundaries with the same underlying point set, the visual boundary, $\bd X$ and the Tits boundary, $\bd_T X$ . The obvious bijection from $\bd_T X$ to $\bd X$  is continuous, but need not be a homeomorphism.

In the classical case where $X$ is a Riemannian $n+1$-manifold of non-positive sectional curvature, then $\bd X = S^n$, so the visual boundary contains very little information (only the dimension).  The Tits boundary, on the other hand, is much more interesting.  For example the Tits boundary of $\E^{n+1}$ is also $S^n$, while the Tits boundary of $\HH^{n+1}$ is discrete.  These are of course different for $n >0$.  Even in the case where $n=2$, there are at least two other possible Tits boundaries: The Tits boundary of $\HH^2 \times \R$ is the spherical suspension of an uncountable discrete set; and the examples of Croke and Kleiner give the infamous eye of Sauran pattern.  In this paper we examine the other extreme, where the visual topology dictates the Tits metric.

Suppose that $X$ admits a geometric group action. Ruane showed in \cite{RUA1} that if $\bd X$ is a suspension of Cantor set, then $\bd_T X$ is the spherical suspension of an uncountable discrete set. In \cite{CHA} it is shown that if $\bd X$ is the join of two Cantor sets, and if $X$ admits a geometric action by a group $G$ that contains $\Z^2$, then $\bd_T X$ is isometric to the spherical join of two uncountable discrete sets.

In this paper, we prove that the same result holds without the $\Z^2$ assumption on $G$. We use the action of ultrafilters over $G$ on $\bd X$, whose  properties were investigated in the paper \cite{GU-SW}. We will also show that if $\bd X$ is the suspension of a join of two Cantor sets, then $\bd_T X$ is also the spherical suspension of the  spherical join of two uncountable discrete sets. These results suggest the definition of Tits rigidity, and the above results can be rephrased into saying that the suspension of a Cantor set, the join of two Cantor sets and the suspension of the  join of two Cantor sets are Tits rigid. On the other hand, a sphere of dimension $n>0$  is not Tits rigid since, as we saw,  $\E^{n+1}$ and $\HH^{n+1}$ have different Tits boundaries.

The organization of this paper is as follows: This section reviews some basic notions and defines Tits rigidity; section 2 completes the proof  that the join of two Cantor set is Tits rigid; and section 3 proves that the suspension of the join of two Cantor set is Tits rigid. We states some questions in section 4.

We refer the reader to \cite{BRI-HAE} or \cite{BAL} for more details.
\begin{Def} For  $X$ a metric space, and $I$ interval of $\R$, an isometric embedding $\alpha:I \to \R$ is called a geodesic.  By abuse of notation we will also refer to the image of $\alpha$ as a geodesic.
\end{Def}

\begin{Def} For $X$ a geodesic metric space and $\Delta(a,b,c)$ a geodesic triangle in $X$ with vertices $a,b,c \in X$ there is an Euclidean  {\em comparison }triangle $\bar \Delta=\Delta(\bar a,\bar b, \bar c) \subset \E^2$ with
$d(a,b) =d(\bar a, \bar b)$, $d(a,c) = d(\bar a, \bar c)$ and $d(b,c)=d(\bar b, \bar c)$.
We define the comparison angle $\bar \angle_a(b,c) =\angle_{\bar a}(\bar b,\bar c)$.

Each point $z \in \Delta(a,b,c)$ has a unique comparison point, $\bar z \in \bar \Delta$.  We say that the triangle $\Delta(a,b,c)$ is CAT(0) if for any $y, z \in \Delta(a,b,c)$ with comparison points $\bar y, \bar z \in \bar \Delta$, $d(y,z) \le d(\bar y,\bar z)$. The space $X$ is said to be CAT(0) if every geodesic triangle in $X$ is CAT(0).
\end{Def}
If $X$ is CAT(0), notice that for any geodesics $\alpha:[0, r] \to X$ and $\beta:[0,s] \to X$ with $\alpha(0)=\beta(0)=a$, the function $$\theta(r,s) =\bar \angle_{a}(\alpha(r),\beta(s))$$ is an increasing function of  $r,s$.  Thus $\lim\limits_{r,s \to 0} \theta(r,s)$ exists and we call this limit $\angle_a(\alpha(r),\beta(s))$.  It follows that for any $a,b,c \in X$, a CAT(0) space, $$\angle_a(b,c) \le \bar \angle_a(b,c).$$

Recall that a metric space is {\em proper} if closed metric balls are compact.  Recall that an action by isometries of a group $G$ on a space $X$ is {\em geometric} if the action is properly discontinuous and cocompact.

{\bf For the duration, $G$ will be  a group and $X$ a proper CAT(0) space on which $G$ acts geometrically.  }

The
(visual) boundary, $\bd X$, is the set of equivalence classes of
rays, where rays  are equivalent if they are within finite Hausdorff distance from each other.  Given a
ray $R$ and a point $x \in X$, there is a ray $S$ emanating from
$x$ with $R \sim S$.  Fixing a base point $\mathbf 0 \in X$, we
define  the visual topology on $\bar X= X \cup \bd X$ by taking the basic
open sets of $ x \in X$ to be the open metric balls about $x$. For
$y \in \bd X$, and $R$ a ray from $\mathbf 0$ representing $y$, we
construct  basic open sets $U(R,n,\epsilon)$ where $n,\epsilon>0$.
We say $z \in U(R,n,\epsilon)$ if the unit speed geodesic,
$S:[0,d(\mathbf 0,z)] \to \bar X$, from $\mathbf 0$ to $z$
satisfies $d(R(n),S(n)) <\epsilon$.  These sets form a basis for a regular
topology on $\bar X$ and $\bd X$. For any $x \in X$ and $u,v \in \bd X$, we can define
$\angle_x(u,v) $ and $\bar \angle_x(u,v)$ by parameterizing the rays $[x,u)$ and $[x,v)$ by
$t\in [0,\infty)$ and taking the limit of $\bar \angle_x$ as $t\to 0$ and $t \to \iy$ respectively.

 For $u,v \in \bd X$, we define $\angle(u,v) =
\sup\limits_{p \in X} \angle_p(u,v)$.
 It follows from \cite{BRI-HAE} that $\angle(u,v) =
\overline{\angle}_p(u,v)$ for any $p \in X$.
 Notice that isometries of $X$ preserve the angle between points of $\bd X$.
 This defines a metric called the angle metric on the set $\bd X$.
 The angle metric defines a path metric, $d_T$ on the set $\bd X$, called the Tits
metric, whose topology is at least as fine as the visual topology of $\bd
X$.  Also $\angle(a,b)$ and $ d_T(a,b)$ are equal whenever either
of them is less than $\pi$.  For any $u \in \bd X$, we define $B_T(u, \epsilon) = \{v \in \bd X : d_T(u,v) < \epsilon\}$
and $\bar B_T(u, \epsilon) = \{v \in \bd X : d_T(u,v) \le \epsilon\}$.

  The set $\bd X$ with the Tits metric is called the Tits  boundary of $X$,
denoted  $\bd_TX$.  Isometries of $X$ extend to isometries of $\bd_TX$.

  The identity function $\bd_T X \to \bd X$ is continuous, but the identity
function
 $\bd X \to \bd_TX$ is only lower semi-continuous.  That is for any sequences
$(u_n), (v_n)  \subset \bd X$ with $u_n \to u$ and $v_n \to v$ in
$\bd X$, then $$\varliminf d_T(u_n,v_n) \ge d_T(u,v)$$

\begin{Def} A subgroup $H < G$ is called convex if there exists closed convex $A\subset X$ with $H$ acting on $A$ geometrically.
\end{Def}
\begin{Def} For $g\in G$, we define $\tau(g) = \inf\limits_{x \in X} d(x,g(x))$.  This minimum is realized and  Min$(g)= \{x \in X|\, d(x,g(x)) = \tau(g)\}$ is nonempty.
\end{Def}
For any $g \in G$,  the centralizer $Z_g$ is a convex subgroup that acts geometrically on Min$(g)$, which is closed and convex by \cite{RUA},\cite{BRI-HAE}. In fact if $g$ is hyperbolic, then Min$(g) = A \times Y$ where $Y$ is a closed convex subset of $X$ on which $Z_g/\langle g\rangle$ acts geometrically (\cite{BRI-HAE},\cite{SWE}), and $A$ is an axis of $g$.

\begin{Def} The boundary of a CAT(0) space will be called a CAT(0) boundary.  If $G$ is a group acting geometrically on a CAT(0) space $X$, then $\bd X$ is called a CAT(0) boundary of $G$, or we say $\bd X$ is a CAT(0) group boundary. In all cases a CAT(0) boundary comes equipped with both  the visual topology and the Tits metric (which normally gives a finer topology).
\end{Def}
\begin{Def}  Let $A$ and $B$ be boundaries of CAT(0) spaces. A function $f:A \to B$ is called a {\em boundary isomorphism}  if $f$ is a homeomorphism in the visual topology and
$f$ is an isometry in the Tits metric.   A function $g:A \to B$ is called a {\em boundary embedding} if $g$ is a boundary isomorphism onto its image, where the metric on $g(A)$ is the restriction of the Tits metric.
\end{Def}
Two boundaries of the same CAT(0) group need not be boundary isomorphic to each other or even homeomorphic to each other \cite{CRO-KLE}.
\begin{Def} For $A \subset X$, $\Lambda A$ is the set of limit points of $A$ in $\bd X$.
For $H <G$,  $\Lambda H = Hx$ where $Hx$ is the orbit of some $x\in X$ ( this is independent of the choice of $x$).
\end{Def}
\begin{Lem}\label{L:con}Let $Y$ be a closed convex subset of $X$. Inclusion of $Y$ into $X$ induces
 $\iota: \bd Y \to \Lambda Y$, a topological embedding of $\bd Y$ in $\bd X$.  Also $\iota$ is  isometric on the angle metric.
 Furthermore if $\diam\, \bd_T Y \le\pi$, then $\iota:\bd_TY \to \bd _TX$ is a boundary embedding.
 \end{Lem}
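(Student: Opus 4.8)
The plan is to work throughout with a base point $\mathbf 0$ chosen in $Y$, which is legitimate since the visual topology does not depend on the base point. As a closed convex subset of the proper CAT(0) space $X$, the set $Y$ is itself a proper CAT(0) space, and by convexity every $X$-geodesic joining two points of $Y$ stays in $Y$; consequently the induced metric on $Y$ is geodesic and agrees with the restriction of $d_X$, and a geodesic ray in $Y$ is literally the same subset of $X$ as the corresponding ray. Two rays in $Y$ are within finite Hausdorff distance in $Y$ if and only if they are in $X$, so inclusion induces a well-defined injection $\iota:\bd Y\to\bd X$. I would first identify its image with $\Lambda Y$: if $R$ is a ray in $Y$ from $\mathbf 0$ then $R(n)\in Y$ and $R(n)\to\iota[R]$, giving $\iota(\bd Y)\subseteq\Lambda Y$; conversely, for $\xi\in\Lambda Y$ pick $y_n\in Y$ with $y_n\to\xi$, note the geodesics $[\mathbf 0,y_n]$ lie in $Y$ by convexity, and use properness to extract a subsequence converging to the ray $[\mathbf 0,\xi)$, which then lies in the closed set $Y$. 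Thus $\iota(\bd Y)=\Lambda Y$.

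For the topological embedding I would exploit the fact that geodesics from $\mathbf 0$ to points of $\bar Y$ are the same in $Y$ as in $X$: each basic neighborhood $U(R,n,\epsilon)$ of $\bd Y$ is exactly the trace on $\bd Y$ of the corresponding basic neighborhood of $\bd X$, so $\iota$ is continuous. Since $\bd Y$ is compact (being the boundary of a proper CAT(0) space) and $\Lambda Y\subseteq\bd X$ is Hausdorff, the continuous bijection $\iota$ is automatically a homeomorphism onto $\Lambda Y$.

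For the angle metric I would invoke the stated identity $\angle(u,v)=\bar\angle_p(u,v)$, valid at every point and in particular at a chosen $p\in Y$. Because the rays $[p,u)$ and $[p,v)$ lie in $Y$ and $d_Y=d_X$ on $Y$, the comparison angles defining $\bar\angle_p$ coincide in $Y$ and in $X$, so $\bar\angle^Y_p(u,v)=\bar\angle^X_p(\iota u,\iota v)$. Applying the identity in both spaces then gives $\angle^Y(u,v)=\bar\angle^Y_p(u,v)=\bar\angle^X_p(\iota u,\iota v)=\angle^X(\iota u,\iota v)$, so $\iota$ is an isometry for the angle metric.

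Finally, assume $\diam\,\bd_T Y\le\pi$. I would first show $d_T^Y=\angle^Y$ on $\bd Y$: where $\angle^Y<\pi$ the two agree by the stated rule, and if $d_T^Y(u,v)=\pi$ then necessarily $\angle^Y(u,v)=\pi$, since $\angle^Y<\pi$ would force $d_T^Y=\angle^Y<\pi$. Because $\angle^Y=\angle^X\circ\iota$, the Tits length of any path lying in $\Lambda Y$ is the same whether measured in $\bd_T Y$ or in $\bd_T X$, whence $d_T^X(\iota u,\iota v)\le d_T^Y(u,v)$. If $\angle^X(\iota u,\iota v)<\pi$ then $d_T^X(\iota u,\iota v)=\angle^X(\iota u,\iota v)=\angle^Y(u,v)=d_T^Y(u,v)$; if $\angle^X(\iota u,\iota v)=\pi$ then $\pi=\angle^X(\iota u,\iota v)\le d_T^X(\iota u,\iota v)\le d_T^Y(u,v)=\pi$. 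Either way $d_T^X(\iota u,\iota v)=d_T^Y(u,v)$, so $\iota$ is a Tits isometry onto its image, which together with the homeomorphism above yields the boundary embedding. I expect the main obstacle to be exactly this last comparison: in general the intrinsic path metric on a subset strictly exceeds the restriction of the ambient path metric, and the role of the diameter hypothesis is precisely to collapse both Tits metrics onto the angle metric, which (unlike the path metric) does restrict correctly.
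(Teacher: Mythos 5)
Your proposal is correct. Its first two parts coincide with the paper's proof: the paper likewise uses convexity to identify geodesics of $Y$ with geodesics of $X$, notes that the basic visual neighborhoods of $\bd X$ trace out exactly the basic neighborhoods of $\bd Y$ (giving the topological embedding onto $\Lambda Y$), and observes that comparison angles at a basepoint of $Y$ agree in both spaces (giving the angle isometry); you simply add detail the paper leaves implicit, such as the properness argument that the image is all of $\Lambda Y$. Where you genuinely diverge is the final step, the only place the hypothesis $\diam\, \bd_T Y \le \pi$ enters. The paper argues by density and continuity: the set $S$ of pairs in $\bd_T Y \times \bd_T Y$ at Tits distance less than $\pi$ is dense (because the Tits metric is a path metric with diameter at most $\pi$), the two Tits metrics agree on $S$ by the rule that angle and Tits distance coincide when either is less than $\pi$, and both $d_T^Y$ and $d_T^X \circ (\iota \times \iota)$ are continuous on $\bd_T Y \times \bd_T Y$ (the latter because $\iota$ is Lipschitz $1$), so they agree everywhere. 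You instead make the pointwise observation that the diameter bound forces $d_T^Y = \angle^Y$ globally, establish $d_T^X(\iota u, \iota v) \le d_T^Y(u,v)$ by pushing paths forward, and then close with a two-case sandwich according to whether $\angle^X(\iota u,\iota v)$ is less than $\pi$ or equal to $\pi$. Both arguments rest on the same two ingredients (the angle isometry and the Lipschitz $1$ inequality, which you derive rather than quote), but yours is more elementary in that it avoids the density claim, which the paper asserts without proof and which itself requires a small path-metric argument; the paper's version is shorter at the cost of that unproved step.
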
\label{L:iota}
 \begin{proof} Since the inclusion $Y \to X$ is isometric, geodesics in $Y$ are geodesics in $X$, so by choosing a base point $y \in Y$, we have $\bd Y \subset \bd X$ which defines $\iota$.  Also for $R$ a geodesic in $Y$ from $y$, $U_X(R,n,\epsilon) \cap Y = U_Y(R,n,\epsilon)$, where $U_X$ is the neighborhood in $X$ and $U_Y$ the neighborhood in $Y$.  Thus $\iota: \bd Y \to \bd X$ is an embedding with image $\Lambda Y$.   Since $Y$ is isometrically embedded in $X$, for any $\alpha,\beta \in \Lambda Y$, $\overline{\angle}_y(\alpha ,\beta)$ is the same in both $X$ and $Y$.  It follows that $\iota
 $ is isometric on the angle metric, and  so $\iota: \bd_T Y \to \bd_T X$ will be Lipschitz 1.

 Now suppose $\diam\, \bd_T Y \le\pi$. This implies that the set $S=\{(\alpha,\beta) \in \bd Y \times \bd Y:\, d_T(\alpha,\beta)< \pi\}$ is dense in $\bd_T Y \times \bd_T Y$.
Since the angle metric and the Tits metric are the same when either is less than $\pi$, $ d_T\circ (\iota\times \iota) = d_T$ on $S$ and it follows that $\iota:\bd_TY \to \bd _TX$ is an isometric embedding.
 \end{proof} A line in the Euclidean plane gives an example of when $\iota$ is not a boundary embedding.

\begin{Def}
A compact metrizable  space  $Y$ is said to be \textbf{Tits rigid}, if for any two CAT(0) group boundaries $Z_1$ and $Z_2$ homeomorphic to $Y$, $Z_1$ is boundary isomorphic to $Z_2$.
\end{Def}
\begin{Def} \cite{BRI-HAE} For $Y_1$, $Y_2$ topological spaces we define their topological join $Y_1*Y_2$ to be the quotient of $ Y_1\times Y_2 \times [0,\frac \pi 2] $
modulo
$(a,b,0) \sim (a,c,0)$ and $( a,c,\frac \pi 2) \sim ( b,c,\frac \pi 2)$.  We will refer to $Y_1\times Y_2\times \{0\}$ as $Y_1$ and we will refer to $Y_1\times Y_2\times \{\frac \pi 2\}$ as $Y_2$.
For fixed $y_i\in Y_i$, the arc $(y_1,y_2,t), t\in [0,\frac \pi 2]$ will be called the join arc from $y_1$ to $y_2$.

  For $Y_1$ and $Y_2$ metric spaces with metrics  bounded by $\pi$, the spherical join $Y_1*_SY_2$ is the  point set $Y_1*Y_2$ endowed with the metric
$$\aligned &d((y_1,y_2,\theta), (y_1',y_2',\theta')) =\\ &\arccos\left[\cos \theta \cos \theta' \cos(d(y_1,y_1'))+ \sin \theta \sin \theta' \cos(d(y_2,y_2'))\right]\endaligned$$
For $Y$, a topological space, we define the suspension $\sum Y$ to be the topological join of $Y$ with $\{n,p\}$, a discrete two point set. In this setting we refer to the join arcs as suspension arcs.
 For $Y$ a metric space with metric bounded by $\pi$, we define the spherical suspension $\sum_S Y$ to be the spherical join of $Y$ with $\{n,p\}$ where $d(n,p)$ is defined to be $\pi$.
\end{Def}
\section{The Join of  two Cantor sets is Tits rigid}

Suppose that $\bd X$ is topologically the suspension of two Cantor sets $C_1$ and $C_2$, so $\bd X \cong C_1 * C_2$. Replacing $G$ with a subgroup of index at most 2, {\bf we may assume that $C_1$ and $C_2$ are $G$ invariant. } By \cite{CHA} the action of $G$ on $X$ is not rank 1.
Thus by \cite{PA-SW}, since $\bd X$ is one dimensional, then $\diam(\bd_T X) \le \frac {4\pi}3$. If $g \in G$ with $\{g^\pm\} \not \subset C_i$ for $i=1,2$, then we are done by \cite{CHA}. Thus we may assume that there are infinitely many hyperbolic $g \in G$ with
$\{g^\pm \} \subset C_1$.  Let $\alpha = d_T(C_1,C_2)$.

 By compactness, there will be points of $ C_1$ and $ C_2$ realizing this minimum.   Since $C_1$ and $C_2$ are closed invariant subsets of $\bd X$, then for any $p \in \bd X$ and $i=1,2$, $d(p,C_i) \le \frac \pi 2$ by \cite{PA-SW} so $\alpha \le \frac \pi 2$.
For  $a\in C_i$  and  $b\in C_{3-i}$, let $\overline{ab}$ be the suspension arc from $a$ to $b$.  For any path $\gamma$ in $\bd(X)$
let $\ell(\gamma)$ be the Tits length of the path $\gamma$ (which may be $\iy$).
\begin{Lem}\label{L:pied}
Let $a \in C_i$ for $i=1,2$ and $b \in C_{3-i}$.  There exists $c \in C_{3-i}-\{b\}$ such that $\ell(\overline{ab})+\ell(\overline{ac}) \le \pi$.
\end{Lem}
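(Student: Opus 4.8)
The plan is to establish the inequality first for a distinguished family of configurations coming from flat half-planes, and then to reach arbitrary $a,b$ by approximation, exploiting the lower semicontinuity that is built into the Tits metric. The engine is the non-rank-one hypothesis: since the action of $G$ is not rank one, the axis of each of the infinitely many hyperbolic $g$ with $\{g^\pm\}\subset C_1$ bounds a flat half-plane $H$. The boundary $\bd_T H$ is then a geodesic arc in $\bd_T X$ of Tits length exactly $\pi$ joining $g^-$ to $g^+$; in particular $d_T(g^-,g^+)=\pi$, which is consistent with $\diam \bd_T X\le \tfrac{4\pi}3$.

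\emph{Decomposing a semicircle into join arcs.} The key topological observation is that the open part $\bd X\setminus(C_1\cup C_2)$ of the join is homeomorphic to $C_1\times C_2\times(0,\tfrac\pi2)$, whose connected components are exactly the open join arcs, since $C_1\times C_2$ is totally disconnected. Hence any arc in $\bd X$ meets $C_1\cup C_2$ in a closed set off which it runs inside a single open join arc, so it is a concatenation of join arcs whose successive endpoints alternate between $C_1$ and $C_2$ (a join arc has one endpoint in each factor). Applying this to the semicircle $\bd_T H$, which joins $g^\pm\in C_1$, I get a chain $g^-=x_0,x_1,x_2,\dots$ with the $x_j$ alternating between $C_2$ and $C_1$ and consecutive ones joined by $\overline{x_{j-1}x_j}$; leaving $g^-$ it must first reach a genuine point $x_1\in C_2$ along $\overline{g^-x_1}$, and then continue along $\overline{x_1x_2}$. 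Because $\bd_T H$ is a geodesic of length $\pi$, the lengths of its constituent join arcs sum to $\pi$, so for any two-sided interior vertex $x_j$ the two adjacent arcs satisfy $\ell(\overline{x_jx_{j-1}})+\ell(\overline{x_jx_{j+1}})\le\pi$. This already proves the lemma for every point occurring as such a vertex, with its two (distinct) neighbours playing the roles of $b$ and $c$.

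\emph{Passing to arbitrary $a,b$.} I would first record that Tits length is lower semicontinuous under visual convergence of paths: if $\gamma_n\to\gamma$ pointwise in the visual topology then $\ell(\gamma)\le\varliminf\ell(\gamma_n)$, which follows from the stated lower semicontinuity of $d_T$ by estimating $\ell$ partition by partition. I would then use the $G$-invariance of $C_1$ and $C_2$, together with the abundance of hyperbolic elements and the dynamics of $G$ on the Cantor sets (the ultrafilter techniques of \cite{GU-SW}), to show that the vertices of the semicircles $\bd_T H$, as $g$ and $H$ range over a $G$-orbit, are dense in the relevant Cantor set and that their neighbours can be made to approximate a prescribed point. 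Given $a\in C_i$ and $b\in C_{3-i}$ this produces configurations $(a_n;b_n,c_n)$ with $a_n\to a$, $b_n\to b$, $c_n\to c$ visually and $\ell(\overline{a_nb_n})+\ell(\overline{a_nc_n})\le\pi$; since $\overline{a_nb_n}\to\overline{ab}$ and $\overline{a_nc_n}\to\overline{ac}$ visually, lower semicontinuity yields $\ell(\overline{ab})+\ell(\overline{ac})\le\varliminf\bigl(\ell(\overline{a_nb_n})+\ell(\overline{a_nc_n})\bigr)\le\pi$. The bounds $d(p,C_i)\le\tfrac\pi2$ and the value of $\alpha$ are used to keep the arcs nondegenerate so that $c\ne b$ in the limit.

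The hard part will be this approximation step. One must control \emph{two} of the three points at once—the shared vertex $a$ and one neighbour $b$—while keeping the third neighbour $c$ bounded away from $b$, so that $c\ne b$ survives passage to the limit; this is exactly where the density of semicircle vertices in each $C_i$ and the limiting machinery of \cite{GU-SW} are essential, and where lower semicontinuity must be applied to a sequence of paths rather than to pairs of points. A secondary subtlety is that a semicircle with a single interior $C_2$-vertex gives the statement only with the shared vertex in $C_2$; producing the $C_1$-orientation requires either semicircles possessing an interior $C_1$-vertex or a symmetric supply of hyperbolic axes, and this asymmetry must be addressed to cover both $i=1$ and $i=2$.
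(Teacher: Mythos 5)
Your first step is sound as far as it goes: since the action is not rank one, each hyperbolic $g$ with $\{g^\pm\}\subset C_1$ yields a flat half-plane whose Tits boundary is an arc of length $\pi$ from $g^-$ to $g^+$; because $C_1\times C_2$ is totally disconnected and every join arc has length at least $\alpha>0$, that arc is a finite concatenation of join arcs, and at any interior vertex the two adjacent arcs have lengths summing to at most $\pi$. But this proves the lemma only for very special pairs: $a$ an interior vertex of some semicircle and $b$ one of its two neighbours, whereas the lemma quantifies over \emph{every} $a\in C_i$ and \emph{every} $b\in C_{3-i}$. The bridge you propose --- density of semicircle vertices together with simultaneous control of an adjacent vertex --- is exactly the step you leave unproved, and it is not a technicality. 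The set of pairs (vertex, adjacent vertex) is a $G$-invariant subset of $C_i\times C_{3-i}$, but nothing available at this point in the paper (no minimality of the $G$-action on $C_1$, let alone on pairs of boundary points) forces such a set to be dense; the ultrafilter maps $T^\omega$ of \cite{GU-SW} transport whole configurations at once and give no independent control of $a_n$ and $b_n$. Even granting density, two sub-gaps remain: if $c_n\to b$ then the limit produces $c=b$, which the lemma excludes, and your appeal to $d_T(p,C_j)\le\frac\pi2$ and to $\alpha$ to keep $c$ away from $b$ is a hope, not an argument; and since all your semicircles have both endpoints in $C_1$, interior $C_1$-vertices exist only if some semicircle consists of at least four join arcs, which nothing guarantees --- so your special case may only ever produce $a\in C_2$, while the lemma needs both $i=1$ and $i=2$.

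The paper's own proof needs none of this machinery and closes in a few lines. It argues by contradiction from the single fact recorded just before the lemma: $d_T(p,C_j)\le\frac\pi2$ for every $p\in\bd X$ and $j=1,2$, because each $C_j$ is a closed $G$-invariant subset of $\bd X$ (the Papasoglu--Swenson bound). Suppose $\ell(\overline{ab})+\ell(\overline{ac})>\pi$ for all $c\ne b$. Any Tits path from an interior point of a join arc must exit through one of the two endpoints of that arc --- this is the same total-disconnectedness observation you make, used locally rather than globally. If $\ell(\overline{ab})>\frac\pi2$, the contradiction hypothesis and lower semi-continuity give $d_T(a,C_{3-i})+\ell(\overline{ab})>\pi$, and one chooses $p\in\overline{ab}$ with $\ell(\overline{pb})>\frac\pi2$ and $\ell(\overline{ap})+d_T(a,C_{3-i})>\frac\pi2$; every path from $p$ to $C_{3-i}$ exits through $a$ or through $b$ and so has length greater than $\frac\pi2$, contradicting the bound. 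If $\ell(\overline{ab})\le\frac\pi2$, then $\ell(\overline{ab})=d_T(a,C_{3-i})$ and the same trick applied to a point $p$ on some $\overline{ac}$ gives the contradiction. In short, the correct proof is an exit-through-endpoints argument plus the $\frac\pi2$ density bound; the flat half-planes, and the unproven equivariant approximation they force on you, are not needed.
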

\begin{proof} Suppose not, then for all $c \in C_{3-i}-\{b\}$, $\ell(\overline{ab})+\ell(\overline{ac})> \pi$.

First consider the case where $\ell(\overline{ab}) >\frac \pi 2$.  By lower semi-continuity, $d_T(a, C_{3-i}) + \ell(\overline{ab}) >\pi$.
We can choose $p\in \overline{ab}$ with $\ell(\overline{pb})> \frac \pi 2$ and $\overline{ap} + d_T(a, C_{3-i}) > \frac \pi 2$.  Thus
$d(p,C_{3-i})> \frac \pi 2$, a contradiction.

Now consider the case where  $\ell(\overline{ab}) \le\frac \pi 2$ (It follows that $\ell(\overline{ab})=d_T(a,C_{3-i})$).  Thus for any $c \in C_{3-i}-\{b\}$ there is a point $p \in \overline{ac}$
with $\ell(\overline{pc})> \frac \pi 2$ and $\ell(\overline{ap})+ \ell(\overline{ab}) > \frac \pi 2$.  It follows that $d_T(p,C_{3-i})> \frac \pi 2$ , a contradiction.
\end{proof}
We then get the following obvious consequence.
\begin{Cor} \label{C:bound} For any $a \in C_1$ and $b \in C_2$, $\ell\left(\overline {ab}\right) \le \pi -\alpha$.
\end{Cor}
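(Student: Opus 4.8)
The plan is to read this off directly from Lemma~\ref{L:pied}. Fix $a \in C_1$ and $b \in C_2$, and apply the lemma with $i=1$, so that $C_{3-i} = C_2$. This produces a point $c \in C_2 - \{b\}$ for which
$$\ell\left(\overline{ab}\right) + \ell\left(\overline{ac}\right) \le \pi.$$
The whole content of the corollary is then to convert the second term $\ell(\overline{ac})$ into the lower bound $\alpha$.

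The key observation is that the join arc $\overline{ac}$ is a path in $\bd_T X$ whose endpoints are $a \in C_1$ and $c \in C_2$. Since $\ell$ denotes Tits length, and the Tits metric is a length (path) metric, the length of any path joining two points is at least the Tits distance between those points; thus $\ell(\overline{ac}) \ge d_T(a,c)$. Because $a \in C_1$ and $c \in C_2$, the definition of $\alpha = d_T(C_1,C_2)$ as the infimal Tits distance between the two sets gives $d_T(a,c) \ge \alpha$, and therefore $\ell(\overline{ac}) \ge \alpha$.

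Substituting this into the inequality from the lemma yields
$$\ell\left(\overline{ab}\right) \le \pi - \ell\left(\overline{ac}\right) \le \pi - \alpha,$$
which is exactly the claim. There is no genuine obstacle here; the statement is an immediate consequence of Lemma~\ref{L:pied} once one notes the single routine fact that Tits length dominates Tits distance. The only point that must be stated cleanly is the passage $\ell(\overline{ac}) \ge d_T(a,c) \ge \alpha$, so that the bound is uniform over all choices of $a$ and $b$ (the auxiliary point $c$ may depend on $a$ and $b$, but the resulting bound $\pi - \alpha$ does not).
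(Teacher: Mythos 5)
Your proof is correct and is precisely the argument the paper has in mind: the corollary is stated there without proof as an ``obvious consequence'' of Lemma~\ref{L:pied}, and the intended reasoning is exactly your chain $\ell(\overline{ac}) \ge d_T(a,c) \ge \alpha$ applied to the point $c$ furnished by the lemma. Nothing is missing.
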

\begin{Lem} \label{L:contr} Suppose for some $b\in C_i$ $i=1,2$,  $d_T(b, C_{3-i}) > \alpha$.  Then
\begin{enumerate}
\item $\alpha < \frac \pi 4$
\item $d_T(b,C_{3-i}) \le \frac \pi 2 - \alpha$
\item $\ell(\overline{bc}) \le \pi - 2\alpha -d_T(b, C_{3-i})$ for all $c \in C_{3-i}$.
\end{enumerate}
\end{Lem}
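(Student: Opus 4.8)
The plan is to prove (3) first and read off (1) and (2) from it. Write $\beta:=d_T(b,C_{3-i})$. For any $c\in C_{3-i}$ the join arc satisfies $\ell(\overline{bc})\ge d_T(b,c)\ge\beta$, so granting (3) we get $\beta\le\pi-2\alpha-\beta$, i.e.\ $\beta\le\frac\pi2-\alpha$, which is (2); and then the hypothesis $\alpha<\beta$ together with $\beta\le\frac\pi2-\alpha$ forces $2\alpha<\frac\pi2$, i.e.\ $\alpha<\frac\pi4$, which is (1). Thus all the content sits in (3), and that is where I would spend the effort.

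For (3) I would argue by contradiction, assuming $\ell(\overline{bc})>\pi-2\alpha-\beta$ for some $c\in C_{3-i}$, in the spirit of Lemma \ref{L:pied}: the goal is to exhibit a point $p$ on a suitable arc with $d(p,C_j)>\frac\pi2$ for one of $j=1,2$, contradicting \cite{PA-SW}. Applying Lemma \ref{L:pied} to $b$ and $c$ already gives a partner $c'\in C_{3-i}\setminus\{c\}$ with $\ell(\overline{bc})+\ell(\overline{bc'})\le\pi$, so it is enough to show the partner arc is long, namely $\ell(\overline{bc'})\ge\beta+2\alpha$. The estimate $\ell(\overline{bc'})\ge\beta$ is automatic (the arc from $b$ reaches $C_{3-i}$ at $c'$), but by itself it only reproves the weaker bound $\ell(\overline{bc})\le\pi-\beta$; the entire difficulty is the extra $2\alpha$.

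To produce that $2\alpha$ I would pass from arc-length bookkeeping to the CAT(1) geometry of $\bd_TX$. Two structural facts drive this. First, an arc from a point to its nearest point in the opposite Cantor set is a Tits geodesic, since any competing route recrosses $C_1$ and $C_2$ and so costs at least $2\alpha$ more; in particular the arc from $b$ realizing $\beta$ and a minimal arc realizing $\alpha$ are geodesics. Second, at each point of $C_1\cup C_2$ two distinct arcs meet at Tits angle $\pi$ (the space of directions is discrete, hence at distance $\ge\pi$ by CAT(1), hence exactly $\pi$), so a loop assembled from such geodesic arcs is a closed local geodesic and therefore has length $\ge 2\pi$. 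I would apply this to a four-arc loop leaving $b$ along $\overline{bc'}$, crossing the two Cantor sets once through a minimal arc of length $\alpha$, and returning to $b$ through its nearest point $c^*$ (length $\beta$); bounding the fourth arc by $\pi-\alpha$ via Corollary \ref{C:bound} and using lower semicontinuity to realize the relevant infima as honest endpoints, the $\ge 2\pi$ constraint should force $\ell(\overline{bc'})\ge\beta+2\alpha$, and then $\ell(\overline{bc})+\ell(\overline{bc'})\le\pi$ gives (3).

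The main obstacle is exactly isolating this extra $2\alpha$. The one-arc projection estimate is genuinely stuck at $\pi-\beta$ (it is already sharp in the abstract metric-graph model of $\bd_TX$), so curvature must enter. The delicate points are selecting the auxiliary loop so that it is embedded with distinct vertices --- bearing in mind that many join arcs have infinite Tits length and so are unavailable as edges --- verifying that the chosen arcs actually close up into a local geodesic, and deploying lower semicontinuity to replace the infima defining $\alpha$ and $\beta$ by nearest points whose connecting arcs are geodesics. Arranging all of these compatibilities at once, so that the systolic bound $2\pi$ can be played against the upper bound $\pi-\alpha$, is where I expect the real work to be.
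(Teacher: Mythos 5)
Your reduction of (1) and (2) to (3) is valid, but (3) is where all the content of the lemma lies, and your argument for it does not close. Writing $\beta=d_T(b,C_{3-i})$, there are two problems. First, the loop cannot be assembled as described: for a minimal arc of length $\alpha$ to appear as a side, both of its endpoints must lie in the sets $A_1=\{a\in C_1:\ d_T(a,C_2)=\alpha\}$ and $A_2=\{a\in C_2:\ d_T(a,C_1)=\alpha\}$, and nothing guarantees that the partner $c'$ produced by Lemma~\ref{L:pied}, or the nearest point $c^*$ to $b$, lies in these sets; moreover the side you bound by $\pi-\alpha$ via Corollary~\ref{C:bound} is not known to be a geodesic, so the loop is not known to be a closed local geodesic (you could instead invoke non-contractibility of a loop of join arcs, as the paper does in Theorem~\ref{T:pi/2}, which makes your angle-$\pi$ and geodesity claims unnecessary --- but this does not repair the second problem). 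Second, and fatally, the arithmetic is circular: with the side bounds you allow, $\alpha+(\pi-\alpha)+\beta$, the systole inequality gives only $\ell(\overline{bc'})\ge 2\pi-(\pi+\beta)=\pi-\beta$, and upgrading $\pi-\beta$ to the needed $\beta+2\alpha$ is exactly the inequality $\beta\le\frac{\pi}{2}-\alpha$, i.e.\ statement (2), which in your scheme is downstream of (3). So even granting every geometric claim, the loop produces no extra $2\alpha$.

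The missing idea is invariance, not curvature. The paper introduces $A_i$ as above: it is nonempty (compactness realizes $\alpha$), closed (lower semicontinuity of $d_T$), and $G$-invariant, so the $\pi/2$-density result quoted from \cite{PA-SW} applies to $A_i$ itself, not merely to $C_1$ and $C_2$. Since $C_i$ is totally disconnected and the path components of $\bd X-(C_1\cup C_2)$ are single open join arcs, every Tits path from $b$ (note $b\notin A_i$ by hypothesis) to $A_i$ must first cross $C_{3-i}$, whence
$$\frac{\pi}{2}\ \ge\ d_T(b,A_i)\ \ge\ d_T(b,C_{3-i})+\alpha,$$
which is (1) and (2) in one line. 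For (3), if $\ell(\overline{bc})>\pi-2\alpha-\beta$, choose $p\in\overline{bc}$ with $\ell(\overline{pc})+\alpha>\frac{\pi}{2}$ and $\ell(\overline{bp})+\beta+\alpha>\frac{\pi}{2}$; any path from $p$ to $A_i$ must exit the arc through $c$ (cost at least $\ell(\overline{pc})+\alpha$) or through $b$ (cost at least $\ell(\overline{bp})+\beta+\alpha$), so $d_T(p,A_i)>\frac{\pi}{2}$, a contradiction. This ``$+\alpha$ per forced crossing,'' applied to the finer invariant set $A_i$, is precisely the source of the extra $2\alpha$ that your approach cannot reach; your diagnosis that curvature must supply it points in the wrong direction.
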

\begin{proof} The subset $A_i =\{a\in C_i |\, d_T(a, C_{3-i})=\alpha\}$ is closed and $G$ invariant.  It follows that $$\frac \pi 2 \ge d_T(b, A_i) \ge d_T(b,C_{3-i}) +d_T(C_{3-i}, A_i)=  d_T(b,C_{3-i})  + \alpha$$  and we have (1) and (2).

Now let $c \in C_{3-i}$.
If $\ell(\overline{bc}) > \pi - 2\alpha -d_T(b, C_{3-i})$, then there is a point $p \in \overline{bc}$ with $\ell(\overline{pc}) +\alpha > \frac \pi 2$ and $ \ell(\overline{bp}) +d_t(b,C_{3-i}) +\alpha > \frac \pi 2$.  It follows that $d_T(p, A_i) > \frac \pi 2$, a contradiction.
\end{proof}
\begin{Def}Let $\beta G$ be the set of all ultrafilters on $G$, and for $\omega \in \beta G$, and $z \in \bd X$, define
$T^\omega(z)=\lim\limits_{g\to \omega} g(z)$. Recall that for each $U$ open set of $\bd X$ with
$T^\omega (z)\in U$, we have $\omega \{g\in G:\, g(z) \in U\}=1$. Thus gives a function $T^\omega:\bd X \to \bd X$ which is not continuous (in general) but is Lipschitz 1 in the Tits metric (see \cite{GU-SW}).
\end{Def}
One might think that $\bd X -(C_1\cup C_2)$ was invariant under $T^\omega$, but this isn't a priori true.  We do however have the following:
\begin{Thm} \label{T:omega}
Let $\omega$ be an ultrafilter on $G$ and $c_i \in C_i$ for $i=1,2$.  Let $\hat c_i = T^\omega(c_i)$ for $i=1,2$.
Then $T^\omega(\cl{c_1c_2}) =\cl{\hat c_1\hat c_2}$.
\end{Thm}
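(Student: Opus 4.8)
The plan is to prove the two inclusions $T^\omega(\cl{c_1c_2})\subseteq\cl{\hat c_1\hat c_2}$ and $\cl{\hat c_1\hat c_2}\subseteq T^\omega(\cl{c_1c_2})$ separately, after recording two structural facts. Since each $C_i$ is closed and $G$-invariant and $\hat c_i=T^\omega(c_i)=\lim_{g\to\omega}g(c_i)$ is a visual limit of points $g(c_i)\in C_i$, we get $\hat c_i\in C_i$; as $C_1\cap C_2=\emptyset$ this makes $\cl{\hat c_1\hat c_2}$ a genuine join arc. Next, because $C_1$ and $C_2$ are $G$-invariant, every $g\in G$ is a self-homeomorphism of $\bd X$ carrying $C_1\cup C_2$ to itself, hence it permutes the path components of $\bd X-(C_1\cup C_2)$. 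Under the identification $\bd X\cong C_1*C_2$ these components are exactly the open join arcs $\{c_1\}\times\{c_2\}\times(0,\frac\pi2)$ (here the total disconnectedness of the Cantor sets is used), so $g$ carries join arcs to join arcs, and matching endpoints gives $g(\cl{c_1c_2})=\cl{g(c_1)g(c_2)}$.

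For the forward inclusion, fix $z\in\cl{c_1c_2}$ and write its image in join coordinates as $g(z)=(g(c_1),g(c_2),t_g)$ with $t_g\in[0,\frac\pi2]$. By compactness of $[0,\frac\pi2]$ the ultralimit $t_\omega=\lim_{g\to\omega}t_g$ exists, and since $g(c_i)\to\hat c_i$ visually and the join-coordinate map $C_1\times C_2\times[0,\frac\pi2]\to\bd X$ is a homeomorphism, the visual ultralimit of $g(z)$ is $(\hat c_1,\hat c_2,t_\omega)$. Thus $T^\omega(z)=(\hat c_1,\hat c_2,t_\omega)\in\cl{\hat c_1\hat c_2}$, which proves $T^\omega(\cl{c_1c_2})\subseteq\cl{\hat c_1\hat c_2}$.

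For the reverse inclusion I would argue by connectedness. Write $S=T^\omega(\cl{c_1c_2})$, and note $S\subseteq\cl{\hat c_1\hat c_2}$ while $S$ contains both endpoints $\hat c_1=T^\omega(c_1)$ and $\hat c_2=T^\omega(c_2)$. The join arc $\cl{c_1c_2}$ has finite Tits length $\ell(\cl{c_1c_2})\le\pi-\alpha$ by Corollary \ref{C:bound} and is a continuous rectifiable path in the Tits metric, hence is connected in the Tits topology. Since $T^\omega$ is Lipschitz $1$, and therefore continuous, on $\bd_TX$, the image $S$ is connected in the Tits topology; as the identity $\bd_TX\to\bd X$ is continuous, $S$ is then connected in the visual topology as well. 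But $\cl{\hat c_1\hat c_2}$ is a visual arc, and a connected subset of an arc containing both endpoints is the whole arc, so $S=\cl{\hat c_1\hat c_2}$.

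The delicate point is the reverse inclusion, and specifically the claim that $\cl{c_1c_2}$ is connected in the \emph{finer} Tits topology and not merely in the visual topology; this is exactly what prevents $T^\omega$ from skipping an interior sub-arc of $\cl{\hat c_1\hat c_2}$. I would justify it using the finiteness of $\ell(\cl{c_1c_2})$ together with the fact, already implicit in Lemma \ref{L:pied} and its selection of intermediate points $p\in\cl{ab}$ with prescribed partial lengths, that a join arc is a continuous Tits-rectifiable path, so that arc-length parametrization is a homeomorphism and the Tits and visual topologies agree on it. Everything else, namely the endpoint identifications, the join-coordinate computation, and the final connectedness argument, is then routine.
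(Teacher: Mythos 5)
Your proof is correct and follows essentially the same route as the paper: the same key fact that each $g\in G$ carries join arcs to join arcs with matching endpoints, a forward inclusion obtained from ultrafilter convergence in the compact join coordinates (the paper phrases this as a contradiction using separating neighborhoods and the quotient map, you as a direct ultralimit computation --- the same idea), and the identical reverse inclusion via finite Tits length of $\cl{c_1c_2}$, Tits-connectedness, the Lipschitz-$1$ property of $T^\omega$, and the fact that a connected subset of an arc containing both endpoints is the whole arc. One immaterial slip: the join-coordinate map $C_1\times C_2\times[0,\frac{\pi}{2}]\to \bd X$ is a quotient map, not a homeomorphism (it collapses the two ends), but your argument only needs its continuity, since ultralimits commute with continuous maps between compact Hausdorff spaces.
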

\begin{proof}   Let $\pi: C_1 \times C_2 \times [0,1] \to C_1*C_2$ be the quotient map. Since $C_i$ is $G$ invariant for $i =1,2$, $T^\omega(C_i) \subset C_i$.

Notice that for each $g \in G$, $g(\cl{c_1c_2})=\cl{g(c_1)g(c_2)}$.  Suppose that for some $b \in [c_1,c_2]$, $\hat b =T^\omega(b) \not \in \cl{\hat c_1 \hat c_2}$.  Then there exists  open neighborhood $U_i $ of $ \hat c_i $ in $C_i$, for $i=1,2$  and open $V \ni \hat b$ of $C_1*C_2$  with
$\pi(U_1 \times U_2 \times [0,1]) \cap V = \emptyset$.  Notice that $\omega \{g \in G:\, g(c_i) \subset U_i\}= 1 $ for $i=1,2$.  However
$\{g \in G :\, g(b) \in \pi(U_1 \times U_2 \times [0,1])\} = \bigcap\limits_{i=1}^2\{g \in G:\, g(c_i) \subset U_i\}$ and so $\omega \{g \in G :\, g(b) \in \pi(U_1 \times U_2 \times [0,1])\} =1$.  It follows that $\omega \{g \in G:\, g(b) \in V\} =0$ which is a contradiction. It follows that $T^\omega(\cl{c_1c_2}) \subseteq\cl{\hat c_1\hat c_2}$.  However by Lemma \ref{L:pied},  $\ell(\cl{c_1c_2}) \le \pi$, so $\cl{c_1c_2}$ is connected in the Tits metric, and since $T^\omega$ is Lipschitz on $\bd_TX$,  $T^\omega(\cl{c_1c_2})$ is connected and therefore $T^\omega(\cl{c_1c_2}) =\cl{\hat c_1\hat c_2}$.

\end{proof}
\begin{Lem}\label{L:hyper} Let $g\in G$ hyperbolic with $\{g^\pm \} \subset C_1$.  If $\alpha < \frac \pi 2$, then there are infinitely many
$c\in C_2$ with $\ell(\cl{g^+c}) = d_T(g^+, C_2)$.
\end{Lem}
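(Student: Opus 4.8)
The plan is to produce a single point $c_0 \in C_2$ whose join arc realizes the distance, namely $\ell(\overline{g^+ c_0}) = D$ where $D := d_T(g^+, C_2)$, and then to spread $c_0$ out into an infinite family using the hyperbolic dynamics of $g$. As a preliminary, note that $c \mapsto d_T(g^+, c)$ is lower semicontinuous on the visually compact set $C_2$, so its infimum $D$ is attained at some $c_0 \in C_2$. By \cite{PA-SW} we have $D \le \frac{\pi}{2} < \pi$, and since $\bd_T X$ is a complete $\mathrm{CAT}(1)$ space \cite{BRI-HAE}, there is a Tits geodesic $\sigma$ from $g^+$ to $c_0$ of length $D$.

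The main step, and the main obstacle, is to show that this minimizing geodesic $\sigma$ is exactly the join arc $\overline{g^+ c_0}$. First, $\sigma$ can meet $C_2$ only at its far endpoint: if $\sigma(t) \in C_2$ for some $t < D$, then the initial subgeodesic would be a path from $g^+$ to $C_2$ of length $t < D$, contradicting the definition of $D$; hence $\sigma([0,D)) \subset \bd X - C_2$. The key then is a topological analysis of $\bd X - C_2$. Writing points of the join $C_1 * C_2$ in coordinates $(a,c,\theta)$, the assignment $(a,c,\theta) \mapsto a$ is well defined and continuous off the $C_2$-end, giving a map $\bd X - C_2 \to C_1$ whose fibers are the cones $\bigcup_{c \in C_2} \overline{ac}$ (with their $C_2$-endpoints deleted), each of which is connected; since $C_1$ is totally disconnected, these fibers are precisely the connected components of $\bd X - C_2$. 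Because $\sigma([0,D))$ is connected and contains $g^+$, it must lie in the single fiber over $g^+$, so every $\sigma(t)$ with $t < D$ has $C_1$-coordinate $g^+$. Inside this fiber the join arcs $\overline{g^+ c}$ meet only at the apex $g^+$, which the injective geodesic $\sigma$ hits only at $t = 0$; hence the $C_2$-coordinate is constant equal to $c_0$ on $(0,D)$, forcing $\sigma = \overline{g^+ c_0}$ and $\ell(\overline{g^+ c_0}) = D$. The delicate point here is exactly this rigidity — ruling out that a short minimizer leaves and re-enters $C_1$ — which is supplied jointly by the total disconnectedness of $C_1$ and the bound $D < \pi$.

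Finally I would propagate $c_0$ by $g$. Since $C_1$ and $C_2$ are disjoint and $G$-invariant, the open join arcs are precisely the components of $\bd X - (C_1 \cup C_2)$, so the homeomorphism $g$ permutes them and carries $\overline{ac}$ to $\overline{g(a)g(c)}$. As $g$ fixes $g^+$, acts as a Tits isometry, and preserves $C_2$ (so that $D$ is $g$-invariant), the set $M = \{c \in C_2 : \ell(\overline{g^+ c}) = D\}$ is $g$-invariant and contains $c_0$. Since $g^- \in C_1$ we have $c_0 \ne g^-$, so hyperbolicity gives $g^n(c_0) \to g^+$ in the visual topology as $n \to \infty$; but $g^+ \in C_1$ does not lie in the closed set $C_2$, so the orbit $\{g^n(c_0)\} \subset M$ cannot be finite. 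Hence $M$ is infinite, which is the desired conclusion.
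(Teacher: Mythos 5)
Your construction of the minimizer is correct, and it is in fact a careful elaboration of what the paper compresses into the single phrase ``by lower semi-continuity'': visual compactness of $C_2$ plus lower semicontinuity of $d_T$ give a closest point $c_0$, and your connectedness argument (the fibers of the coordinate projection $\bd X - C_2 \to C_1$ are exactly the components of $\bd X - C_2$ because $C_1$ is totally disconnected, so a Tits geodesic of length $<\pi$ from $g^+$ to $c_0$ cannot leave the cone over $g^+$) correctly forces the minimizing Tits geodesic to coincide with the join arc $\overline{g^+c_0}$.

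The gap is in your final paragraph. For a hyperbolic isometry of a CAT(0) space, ``hyperbolicity gives $g^n(c_0)\to g^+$'' is false: north--south dynamics on $\bd X$ holds for rank one isometries, and the paper notes at the start of Section 2 (citing \cite{CHA}) that the action of $G$ on $X$ is \emph{not} rank one, so no element covered by this lemma can have that property. Concretely, if $X = T_1\times T_2$ is a product of trees with $\bd T_i = C_i$ and $g=(g_1,\mathrm{id})$ with $g_1$ hyperbolic in $T_1$, then $\{g^{\pm}\}\subset C_1$ but $g$ fixes every point of $C_2$, so $g^n(c_0)=c_0$ for all $n$; similarly a translation of $\E^2$, or a hyperbolic isometry of $\HH^2\times\R$ of the form $h\times\mathrm{id}$, fixes boundary points far from $g^{\pm}$. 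Thus your argument that the orbit $\{g^n(c_0)\}$ is infinite does not go through, and indeed your proposal never invokes the hypothesis $\alpha<\frac\pi2$, which is exactly what is needed to rule out a finite orbit. The paper's route is: if some positive power of $g$ fixed $c_0$, then by \cite{SWE} the group $G$ would contain $\Z^2$, and then by \cite{CHA} one would have $\alpha=\frac\pi2$, contradicting the hypothesis; hence no positive power of $g$ fixes $c_0$, the $\langle g\rangle$-orbit of $c_0$ is infinite, and (as you do correctly argue, since $g$ fixes $g^+$, preserves $C_2$, permutes join arcs, and is a Tits isometry) every orbit point $g^n(c_0)$ satisfies $\ell\left(\overline{g^+\,g^n(c_0)}\right) = d_T(g^+,C_2)$.
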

\begin{proof}
By lower semi-continuity, there exists $c \in C_2$ with $\ell(\cl{g^+c}) = d_T(g^+, C_2)$.
If any positive power of $g$ fixes  $c$, then by \cite{SWE}, there is $\Z^2 <G$ and  by \cite{CHA} $ \alpha = \frac \pi 2$.
Thus the $\langle g \rangle$ orbit of $c$ is infinite, and all points $b$ in this orbit satisfy $\ell(\cl{g^+b}) = d_T(g^+, C_2)$.
\end{proof}

\begin{Thm}\label{T:pi/2} $\alpha = \frac \pi 2$.
\end{Thm}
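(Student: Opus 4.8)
The plan is to argue by contradiction. Assume $\alpha<\frac\pi2$; I will manufacture a point of $C_2$ fixed by a power of a hyperbolic element, for then \cite{SWE} produces a $\Z^2<G$ and \cite{CHA} forces $\alpha=\frac\pi2$, contradicting $\alpha<\frac\pi2$. Observe that Lemma~\ref{L:hyper} already handles the possibility that a power of $g$ fixes one of the distance--minimizing points $c\in C_2$ (those with $\ell(\overline{g^+c})=d_T(g^+,C_2)$): under our standing hypothesis it tells us instead that such minimizers have \emph{infinite} $\langle g\rangle$--orbit. Hence the genuine fixed point must be sought as a limit of such an orbit, and this is where the ultrafilter action $T^\omega$ enters.

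First I would pin down the geometry of the minimizers. As $g$ is hyperbolic, $g^+$ and $g^-$ are the endpoints of an axis, so $\angle(g^+,g^-)=\pi$ and thus $d_T(g^+,g^-)\ge\pi$. Consequently, for every $c\in C_2$ the concatenation $\overline{g^+c}\cup\overline{cg^-}$ has Tits length at least $\pi$, i.e. $\ell(\overline{g^+c})+\ell(\overline{g^-c})\ge\pi$. From this and Lemma~\ref{L:contr} I claim $d_T(g^+,C_2)=d_T(g^-,C_2)=\alpha$. Indeed, if $d_T(g^+,C_2)=\beta>\alpha$, then Lemma~\ref{L:contr}(3) gives $\ell(\overline{g^+c})\le\pi-2\alpha-\beta$ for all $c$, whence $\ell(\overline{g^-c})\ge 2\alpha+\beta$, so $d_T(g^-,C_2)\ge 2\alpha+\beta>\alpha$; applying Lemma~\ref{L:contr}(3) to $g^-$ and substituting back through the same inequality yields $\beta\ge 4\alpha+\beta$, which is absurd since $\alpha>0$. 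Thus both endpoints realize $\alpha$, every minimizer $c$ of $g^+$ satisfies $\ell(\overline{g^-c})=\pi-\alpha$ (it is $\le\pi-\alpha$ by Corollary~\ref{C:bound} and $\ge\pi-\alpha$ by the displayed inequality), and therefore $d_T(g^+,g^-)=\pi$ with $c$ lying on a Tits geodesic from $g^+$ to $g^-$ of length exactly $\pi$.

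Next I would bring in $T^\omega$. By Lemma~\ref{L:hyper} fix a minimizer $c_0\in C_2$; its infinite orbit accumulates in the compact set $C_2$. Take an idempotent ultrafilter $\omega$ lying in the closure of $\langle g\rangle$ in $\beta G$ and put $\hat c=T^\omega(c_0)\in C_2$. Since $T^\omega(g^+)=g^+$, Theorem~\ref{T:omega} gives $T^\omega(\overline{g^+c_0})=\overline{g^+\hat c}$, and as $T^\omega$ is Lipschitz~$1$ in the Tits metric the image arc has length $\le\alpha$; hence $\hat c$ is again a minimizer, and idempotency gives $T^\omega(\hat c)=\hat c$, so $\hat c$ is a recurrent point of its $\langle g\rangle$--orbit.

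The hard part is the last step: upgrading the recurrence $T^\omega(\hat c)=\hat c$ to honest periodicity, i.e. producing $k>0$ with $g^k\hat c=\hat c$, so as to trigger the $\Z^2$ mechanism. The obstruction is exactly the mismatch between the visual topology (in which the orbit recurs) and the Tits metric (in which $T^\omega$ is controlled). I expect to resolve it using the structure $\mathrm{Min}(g)=A\times Y$: the length--$\pi$ geodesic $[g^+,g^-]$ through $c_0$ should lie in $\bd_T(\mathrm{Min}(g))=\{g^+,g^-\}*_S\bd_T Y$, so $c_0$ is a point of $C_2$ sitting strictly between the pole $g^+$ and the $\frac\pi2$--equator $\bd_T Y$ of this spherical suspension; since a join arc from $g^+\in C_1$ to $g^-\in C_1$ meets $C_2$ in a single point, identifying that point with the equator forces the minimizer to sit at distance $\frac\pi2$, i.e. $\alpha=\frac\pi2$. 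Making this identification precise—equivalently, extracting a genuine periodic point from $\hat c$—is the main obstacle.
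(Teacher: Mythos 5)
You do not prove the theorem, and you say so yourself: the whole strategy funnels into producing $k>0$ with $g^k\hat c=\hat c$, and that step is absent. This is not a gap that more care will close, because the output of your ultrafilter construction is strictly too weak. A point $\hat c\in C_2$ with $T^\omega(\hat c)=\hat c$, for $\omega$ an idempotent in the closure of $\langle g\rangle$ in $\beta G$, is nothing more than a recurrent point of the $\langle g\rangle$-action on the compact set $C_2$, and recurrence never implies periodicity: a $\Z$-odometer acting on a Cantor set is minimal, so every point is recurrent and is fixed by suitable idempotent ultrafilter limits, while no point is periodic. Nothing in your outline brings the CAT(0) geometry to bear against this scenario; on the contrary, under your standing assumption $\alpha<\frac\pi2$, Lemma \ref{L:hyper} says exactly that every minimizer has an infinite $\langle g\rangle$-orbit, so ruling out odometer-like dynamics of $g$ on $C_2$ is precisely the hard content, and it is untouched. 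Your fallback sketch via $\mathrm{Min}(g)=A\times Y$ is likewise unsupported: Lemma \ref{L:con} embeds $\bd_T\mathrm{Min}(g)=\{g^\pm\}*_S\bd_T Y$ isometrically into $\bd_T X$ (given the diameter bound), but it gives no reason that the particular Tits geodesic from $g^+$ to $g^-$ through $c_0$ lies in $\Lambda\,\mathrm{Min}(g)$ --- Tits geodesics between points at distance $\pi$ can branch at their endpoints and are far from unique --- nor that its crossing with the ``equator'' $\bd_T Y$ is a point of $C_2$, much less that it equals $c_0$.

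For the record, your preliminary step is correct and is not in the paper: combining $\ell(\cl{g^+c})+\ell(\cl{g^-c})\ge d_T(g^+,g^-)\ge\pi$ with Lemma \ref{L:contr}(3) and Corollary \ref{C:bound} to force $d_T(g^+,C_2)=d_T(g^-,C_2)=\alpha$ is a sound bootstrap (it uses, as the paper also does implicitly, that $d_T(a,C_{3-i})=\inf_c\ell(\cl{ac})$, since paths in $C_1*C_2$ starting at $a$ must run along suspension arcs until they first meet $C_{3-i}$). But the paper's proof then goes a completely different way, one that needs no periodic point and no $\Z^2$. Assuming $\alpha<\frac\pi2$, it assembles five points $a,b\in C_1$ and $c,d,e\in C_2$ whose arc lengths are pinned down by Lemma \ref{L:hyper}, Corollary \ref{C:bound}, and the fact that nontrivial loops in the CAT(1) space $\bd_T X$ have length $\ge 2\pi$; it then takes an ultrafilter \emph{pulling from the midpoint} $m$ of $\cl{bc}$, so that by \cite{GU-SW} the map $T^\omega$ is an isometry on Tits segments of length $\le\pi$ from $m$ and folds $\bd X$ into a union of length-$\pi$ geodesics emanating from $\hat m$; together with Theorem \ref{T:omega} this forces $\hat d=\hat e$ and yields a nontrivial loop $\hat a\hat c\hat b\hat d$ of Tits length at most $\pi+2\alpha<2\pi$ (or $\pi+2\beta<2\pi$ in the second case), a contradiction. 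Reducing to the $\Z^2$ case of \cite{CHA}, as you attempt, is the natural first idea, but the point of this theorem is that the $\Z^2$ hypothesis cannot be manufactured and must be circumvented.
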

\begin{proof}
We assume $\alpha < \frac \pi 2$.

Case I: For any $a \in C_1$, $d_T(a, C_2)= \alpha$.   Using Lemma \ref{L:hyper} there exists $b \in C_1$ and distinct $c,d \in C_2$ with
$\ell\left(\cl{b c}\right) =\alpha = \ell\left(\cl{bd}\right)$.  Choose $a \ne b$, with $a \in C_1$ and then choose $e \in C_2$ with $\ell\left(\cl{ae}\right) = \alpha$.
By Corollary \ref{C:bound},  $\ell\left(\cl{ac}\right), \ell\left(\cl{ad}\right), \ell\left(\cl{be}\right) \le \pi - \alpha$.   Each of the  loops $aebd$, $aebc$ and $adbc$  which is non-trivial must have length at least $2\pi$.  It follows that $\ell\left(\cl{ac}\right), \ell\left(\cl{ad}\right), \ell\left(\cl{be}\right) = \pi - \alpha$.
 Let $m$ be the midpoint of the segment $\cl{bc}$.  Let $\omega \in \beta G$ be an ultrafilter pulling from $m$.  Let $ T^\omega(a) =\hat a$, $ T^\omega(b) =\hat b$, $ T^\omega(c) =\hat c$, $ T^\omega(d) =\hat d$, $ T^\omega(e) =\hat e$, and $ T^\omega(m) =\hat m$.  By \cite{GU-SW}, $T^\omega$ is an isometry on each Tits segment  of length at most $\pi$ from $m$ and
 $T^\omega(\bd X)$ is contained in the set of all Tits geodesics of length $\pi$ from $\hat m$ to some point $\hat p \in \bd X$.  Since these geodesics can branch only at $\hat m$ and $\hat p$, it follows $T^\omega\left(\cl{bd}\right) \subset T^\omega\left(\cl{be}\right)$.  However by Theorem \ref{T:omega}, $T^\omega\left(\cl{bd}\right)=\cl{\hat b \hat d}$ and $T^\omega\left(\cl{be}\right)= \cl{\hat b \hat e}$.  Thus $\cl{\hat b \hat d}\subset  \cl{\hat b \hat e}$ and it follows by definition that $\hat d = \hat e$.
\begin{figure}[h]
\centering
\def\svgwidth{0.65\columnwidth}
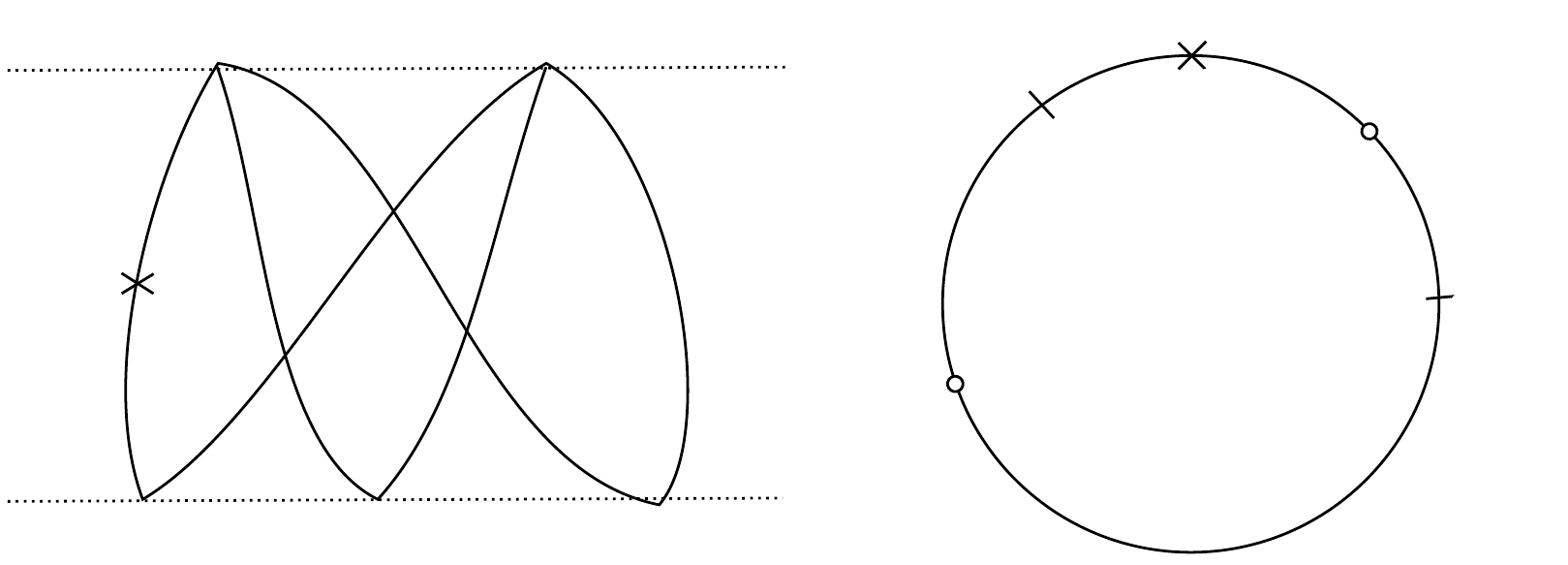
\vspace{.5in}
\def\svgwidth{0.65\columnwidth}
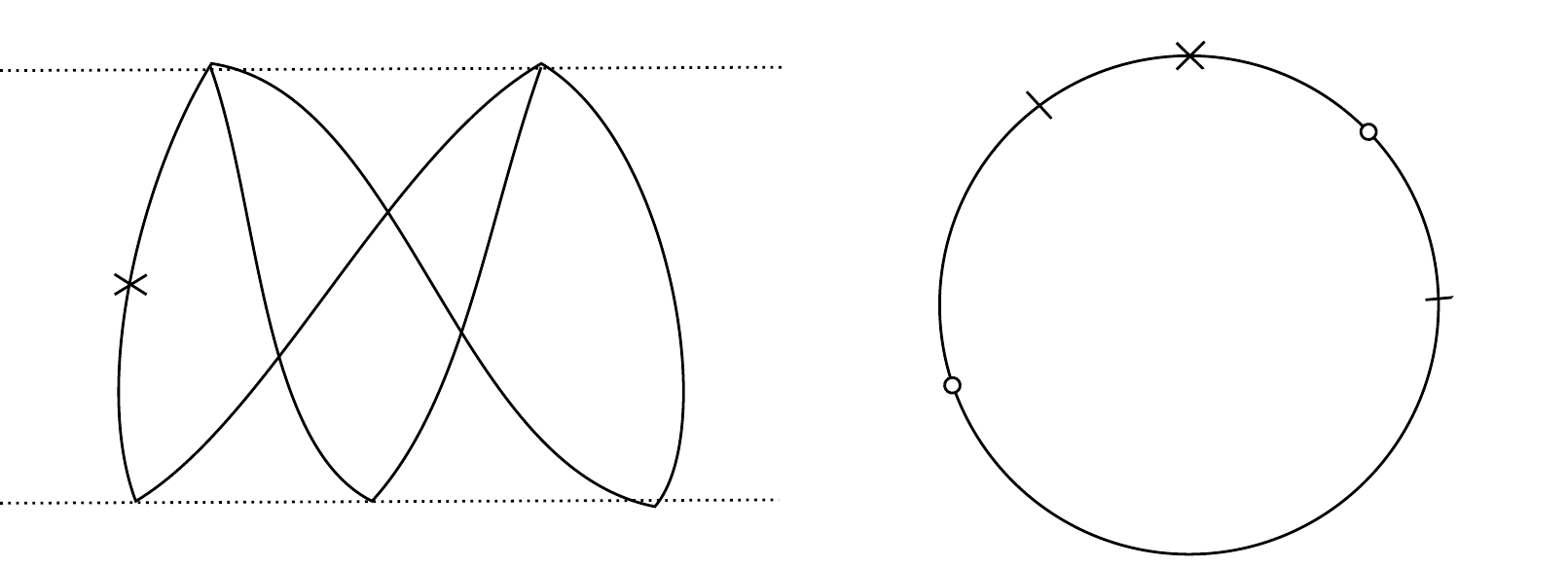
\caption{Proof of Theorem \ref{T:pi/2}}
\label{casespic}
\end{figure}

 Since $T^\omega$ is an isometry on any  Tits segment from $m$ of length at most $\pi$, and $d_T(c,m) < d_T(d,m) \le \pi$, $\hat c \neq \hat d$ and similarly $\hat a\neq \hat b$.  Thus the  loop $\hat a\hat c \hat b \hat d$ is nontrivial.  Since $T^\omega$ is Lipschitz with constant one on the Tits metric, then $\ell\left(\cl{\hat b \hat d}\right) =\alpha =\ell\left(\cl{\hat b \hat c}\right) $.  Since $\hat e =\hat d$, $\ell\left(\cl{\hat a \hat d}\right) =\alpha$ and finally $\ell\left(\cl{\hat a \hat c}\right) \le \pi - \alpha$.  Thus the nontrivial loop  $\hat a\hat c \hat b \hat d$
 has $\ell(\hat a\hat c \hat b \hat d) \le  3\alpha +\pi -\alpha = \pi +2\alpha < 2\pi$ since $\alpha < \frac \pi 2$.  This is a contradiction.

 Case II: There is $a \in C_1$ with $d_T(a,C_2) > \alpha$.  Let $g\in G$ a hyperbolic with  $\{g^\pm \} \subset C_1$, and let $b = g^+$ and $\beta = d_T(b,C_2)$.  Notice by Lemma  \ref{L:contr},  $\beta < \frac \pi 2$. Using Lemma \ref{L:hyper} there are distinct $c,d \in C_2$ with
$\ell\left(\cl{b c}\right) =\beta = \ell\left(\cl{bd}\right)$. Choose $a \ne b$, with $a \in C_1$ and then choose $e \in C_2$ with $\ell\left(\cl{ae}\right) = \alpha$.
We now proceed as in Case I pulling from $m$ the mid point of $\cl{bc}$. We obtain as before a nontrivial loop $\hat a\hat c \hat b \hat d$. However  this time we have $\ell\left(\cl{\hat b \hat d}\right), \ell\left(\cl{\hat b \hat c}\right) \le \beta$.  Arguing as in Case I, $\ell\left(\cl{\hat a \hat d}\right) =\alpha$ and  $\ell\left(\cl{\hat a \hat c}\right) \le \pi - \alpha$.
Thus the nontrivial loop  $\hat a\hat c \hat b \hat d$
 has $\ell(\hat a\hat c \hat b \hat d) \le  2\beta +\alpha +\pi -\alpha = \pi +2\beta < 2\pi$ since $\beta< \frac \pi 2$. and we have the same contradiction as before.
\end{proof}

\begin{Thm} \label{T:join} The join of two cantor sets is Tits rigid.
\end{Thm}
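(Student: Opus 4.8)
The plan is to promote $\alpha=\frac\pi2$ (Theorem \ref{T:pi/2}) into a complete description of the Tits metric and then recognize it as a spherical join. First I would record the local picture. Since $d_T(a,C_2)\ge\alpha=\frac\pi2$ for $a\in C_1$ while $d_T(p,C_2)\le\frac\pi2$ for every $p$ by \cite{PA-SW}, we get $d_T(a,C_2)=\frac\pi2$ for all $a\in C_1$, and symmetrically. Combined with Corollary \ref{C:bound} ($\ell(\overline{ab})\le\pi-\alpha=\frac\pi2$) and $\ell(\overline{ab})\ge d_T(a,b)\ge\frac\pi2$, every suspension arc $\overline{ab}$ is a Tits geodesic of length exactly $\frac\pi2$ with $d_T(a,b)=\frac\pi2$ for all $a\in C_1$, $b\in C_2$. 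Parametrizing each arc by Tits arclength then presents the point set $\bd X=C_1*C_2$ with a distinguished coordinate $\theta\in[0,\frac\pi2]$ along each arc.

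The key device is the pair of projections $p_1\colon(C_1*C_2)\setminus C_2\to C_1$ and $p_2\colon(C_1*C_2)\setminus C_1\to C_2$ forgetting the second (resp.\ first) join coordinate; these are continuous in the visual topology and their targets are totally disconnected. Because the visual topology is coarser than the Tits topology, every Tits geodesic is a visual path, so any Tits geodesic joining two distinct points of $C_1$ must meet $C_2$ (otherwise $p_1$ would send a connected path onto two distinct points of a totally disconnected set), and dually. From this I would deduce, for $p=\overline{ab}(\theta)$, that $d_T(p,C_1)=\theta$ (nearest point $a$) and $d_T(p,C_2)=\frac\pi2-\theta$ (nearest point $b$): any $C_1$-competitor other than $a$ forces the geodesic through $C_2$, hence has length $\ge\frac\pi2\ge\theta$. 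In particular, for distinct $a,a'\in C_1$ every geodesic $[a,a']$ meets $C_2$, so $d_T(a,a')\ge\frac\pi2+\frac\pi2=\pi$, and the path through any $b\in C_2$ gives $\le\pi$; hence $d_T(a,a')=\pi$, and symmetrically on $C_2$.

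With these facts I would verify the spherical join distance formula for $p=\overline{ab}(\theta)$ and $p'=\overline{a'b'}(\theta')$ by cases. The cases $a=a'$ or $b=b'$ are immediate: the geodesic is forced through the common factor, giving $d_T(p,p')=\theta+\theta'$ when $a=a'$ and $d_T(p,p')=(\frac\pi2-\theta)+(\frac\pi2-\theta')$ when $b=b'$, each matching the spherical join formula. The essential case is $a\ne a'$, $b\ne b'$, where the answer must be $\pi-|\theta-\theta'|$: the upper bound comes from the concatenation $p\to a\to b'\to p'$, and for the lower bound I assume $L:=d_T(p,p')<\pi$ and argue that a length-$L$ geodesic is injective and meets each of $C_1,C_2$ at most once (distinct same-factor points are at distance $\pi>L$) yet must meet both (projection argument). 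Since cross distances are exactly $\frac\pi2$, the unique $C_1$-point and $C_2$-point occur at parameters differing by $\frac\pi2$, and comparing their order against the constraints coming from $d_T(p,C_i)$ and $d_T(p',C_i)$ forces $L\ge\pi-|\theta-\theta'|$, a contradiction unless equality holds. This identifies $\bd_T X$ isometrically with the spherical join $C_1*_SC_2$ of the two Cantor sets each carrying the discrete $\pi$-metric, i.e.\ the spherical join of two uncountable discrete sets.

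Finally, for Tits rigidity itself I would observe that the factors and the arc structure are recoverable from the visual topology alone: the branch points of $C_1*C_2$ are exactly $C_1\cup C_2$, and the (complete bipartite) adjacency pattern of arcs recovers the unordered partition $\{C_1,C_2\}$. I would also check that the Tits arclength coordinate $\theta$ agrees with the topological join coordinate, each arc being spanned by a flat sector in $X$ on which the angular, visual and Tits-length parametrizations coincide. Then, given two boundaries $Z_1,Z_2$ homeomorphic to $C_1*C_2$, I would choose homeomorphisms of their Cantor factors and extend them join-coordinate-wise; this map is simultaneously a visual homeomorphism and, by the distance formula, a Tits isometry, hence a boundary isomorphism. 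I expect the main obstacle to be the lower bound in the generic case, where one must pin down the ordering of the forced $C_1$- and $C_2$-crossings to extract exactly $\pi-|\theta-\theta'|$, together with the verification that the Tits-length and visual parametrizations of the arcs coincide, which is precisely what upgrades the factorwise map from a visual homeomorphism to a genuine boundary isomorphism.
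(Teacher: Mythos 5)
Your metric computations are correct, and in fact they supply a careful proof of something the paper merely asserts: the paper's proof of Theorem \ref{T:join} says only ``Note that $\Phi$ is an isometry,'' whereas your projection argument (Tits geodesics are visual paths, and a path missing $C_2$ would project to a connected subset of the totally disconnected $C_1$) correctly forces every geodesic of length $<\pi$ to cross the factors, yielding $d_T(a,a')=\pi$ within factors, cross distances $\tfrac\pi2$, and the spherical join formula. The genuine gap is in your final paragraph, and it is exactly the step to which the paper devotes its entire proof. Knowing that $\bd_TX$ is isometric to $C_1*_SC_2$ (discrete factors) and that $\bd X$ is homeomorphic to $C_1*C_2$ does not yet give Tits rigidity: a boundary isomorphism must be a \emph{single} map that is simultaneously a visual homeomorphism and a Tits isometry. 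Your ``join-coordinate-wise'' extension conflates two different coordinates: the topological join coordinate coming from the given (arbitrary) homeomorphism $\bd X\cong C_1*C_2$, and the Tits arclength coordinate $\theta$. If you extend using the former, the map is a visual homeomorphism but there is no reason it is a Tits isometry; if you use the latter, it is a Tits isometry by your distance formula, but its visual continuity is unproved. Your proposed repair --- checking that ``the Tits arclength coordinate agrees with the topological join coordinate,'' via flat sectors --- is not well posed: the topological join coordinate is not canonical (it depends on the chosen homeomorphism), so there is nothing for $\theta$ to agree with, and no theorem available here produces a flat sector in $X$ spanning a prescribed Tits geodesic of length $<\pi$ (the supremum defining the angle need not be attained). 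Even per-arc agreement would not suffice: what is needed is that the arclength parametrizations, assembled over \emph{all} arcs simultaneously, define a continuous map from the abstract join.

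What closes the gap --- and is the actual content of the paper's proof --- is the following. Define $\Phi:C_1*C_2\to\bd X$ by sending $(a,b,\theta)$ to the point of the suspension arc $\cl{ab}$ at Tits distance $\theta$ from $a$, and prove that $\Phi$ is continuous from the join topology to the visual topology; since the domain is compact and $\Phi$ is a bijection, $\Phi$ is then a homeomorphism, and by the distance formula it is also an isometry $C_1*_SC_2\to\bd_TX$, so composing the ``$\Phi$'' of one boundary with the ``$\Phi^{-1}$'' of the other gives the boundary isomorphism. Continuity is where Theorem \ref{T:omega} and lower semi-continuity of $d_T$ enter: if $(a_k,b_k,\theta_k)\to(a,b,\theta)$ with $\theta\in(0,\tfrac\pi2)$, then $a_k\to a$ and $b_k\to b$, any visual cluster point $p$ of $\Phi(a_k,b_k,\theta_k)$ lies on the arc $\cl{ab}$, and lower semi-continuity gives $d_T(p,a)\le\liminf\theta_k=\theta$ and $d_T(p,b)\le\tfrac\pi2-\theta$; since these distances sum to exactly $\tfrac\pi2$ along the geodesic $\cl{ab}$, both are equalities and $p=\Phi(a,b,\theta)$ (the endpoint cases use only lower semi-continuity). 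Without this argument, your factorwise map is only a Tits isometry of Tits boundaries, not a boundary isomorphism, so the rigidity statement itself remains unproved.
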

\begin{proof}

By Theorem \ref{T:pi/2}, $\alpha =\frac \pi 2$ and so by Corollary \ref{C:bound}, for any $a \in C_1$ and $b \in C_2$, $\ell\left(\overline {ab}\right) =\frac \pi 2$.

   Let $\hat Z= C_1*_S C_2 $ be the spherical join where the metric on $C_1$ is always $\pi$ for distinct points and similarly for $C_2$, so both are discrete as metric spaces.   Let $Z=C_1* C_2$ be the topological join (Notice that $C_1$ and $C_2$ are not discrete here).  Notice that as point sets, $\hat Z = Z$.

Define $\Phi: \hat Z \to \bd_T X$ by $\Phi$ being the identity on $C_1$ and $C_2$ and  $\Phi(c_1,c_2,t) = x$ where $x \in \cl{c_1c_2}$ with $d_T(c_1,x) =t$.  Note that $\Phi $ is an isometry.  We must show that $\Phi: Z \to \bd X$ is a homeomorphism (same point sets, different topologies).
Since $Z$ is compact and $\Phi$ is a bijection, it suffices to show that $\Phi$ is continuous.  Let $(z_k) \subset Z$ and with $z_k \to z$.
Pulling back to the product, we have $z=(a,b,t)$ and $z_k= (a_k,b_k,t_k)$  where $a,a_k \in C_1$, $b,b_k \in C_2$ and $t,t_k \in [0,\frac \pi 2]$.

We will show that $\Phi(z_k) \to \Phi(z)$.  For $t=0$, $\Phi(z) =a$.  Consider the sequence $(a_k) \subset C_1 \subset \bd X$. Since $a_k \to a$ and $d_T(a_k, \Phi(z_k)) = t_k$ by definition,  then $\Phi(z_k) \to a =\Phi(z) $ by lower semi-continuity of the Tits metric.  Similarly if $t = \frac \pi 2$.
When $t \in (0,\frac \pi 2)$ then $a_k \to a$ and $b_k \to b$ (not true in the other two cases).  By Theorem \ref{T:omega}, any cluster point $p$ of $(\Phi(z_k))$ lies on the suspension arc $\cl{ab}$.  By lower semi-continuity, $d_T(p, C_1) \le t$ and $d_T(p,C_2) \le \frac \pi 2 -t$.  It follows that
$d_T(p,C_1) = t$ and so $p = \Phi(z)$ so $\Phi(z_k) \to \Phi(z)$.
Thus $\Phi$ is a homeomorphism and a Tits isometry.

For any two such CAT(0) group boundaries, we get the boundary isomorphism by composing the "$\Phi$" from one with the "$\Phi^{-1}$" of the other.

\end{proof}
\section{Suspension of the join of two Cantor sets}

We have proven that the join of two Cantor sets $C_1$ and $C_2$ is Tits rigid. We want to prove that the suspension of it, i.e. $\sum (C_1 * C_2)$, is also Tits rigid.  We first need a result from dimension theory. We will use inductive dimension, which is equivalent to covering dimension in our setting.

 We define dim$\,\emptyset =-1$.
For  a point $z \in Z$, $Z$  has dimension $\le k$ at $p$ if for any neighborhood $U$ of $z$ there is a neighborhood $V \subset U$ of $z$ with dim$\, \bd V\le k-1$.  $Z$ has dimension $\le k$ if $Z$ has dimension $\le k$ at each point.

\begin{Lem}\label{L:dim} If $Z$ is a compact metrizable space of dimension $k$, then the suspension of $Z$, $\sum Z$ has dimension $k+1$.
\end{Lem}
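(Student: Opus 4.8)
The plan is to establish the two inequalities $\dim \sum Z \le k+1$ and $\dim \sum Z \ge k+1$ separately. Throughout I would use three standard facts from dimension theory, all valid interchangeably for the inductive and covering dimensions in our (compact metrizable, hence separable metrizable) setting: dimension is monotone under passage to subspaces; the countable closed sum theorem, so that a countable union of closed subsets each of dimension $\le m$ again has dimension $\le m$; and the product inequality $\dim(A\times B)\le \dim A+\dim B$. I would also fix the model $\sum Z = \bigl(Z\times[-1,1]\bigr)/\!\sim$, collapsing $Z\times\{1\}$ to the pole $n$ and $Z\times\{-1\}$ to $p$, so that $O:=\sum Z\setminus\{n,p\}$ is the open set $Z\times(-1,1)$, with the equator sitting at parameter $0$; I assume $Z$ nonempty (the case $k=-1$ being trivial).

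For the upper bound I would argue pointwise from the inductive definition, so it suffices to exhibit, at each point and each neighborhood $U$, a smaller neighborhood $V$ with $\dim\bd V\le k$. At the pole $n$ a cofinal family of neighborhoods is given by the caps $V_s=\{n\}\cup(Z\times(s,1))$, whose boundary $\bd V_s=Z\times\{s\}$ is homeomorphic to $Z$ and so has dimension $k$; similarly at $p$. At a point $(z_0,s_0)\in O$ I would use $\dim Z=k$ to choose a small neighborhood $W$ of $z_0$ in $Z$ with $\dim\bd_Z W\le k-1$, and set $V=W\times(a,b)$ with $[a,b]\subset(-1,1)$ and $V\subset U$. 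Then $\bd V=(\bd_Z W\times[a,b])\cup(\overline{W}\times\{a,b\})$ is a finite union of closed sets of dimension $\le k$: the first factor by the product inequality, the second because $\overline{W}\subseteq Z$. The sum theorem gives $\dim\bd V\le k$. Hence every point of $\sum Z$ has dimension $\le k+1$, so $\dim\sum Z\le k+1$.

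For the lower bound I would note that $Z\times[-\tfrac12,\tfrac12]$ is a closed subset of $O$, hence a subspace of $\sum Z$ homeomorphic to $Z\times I$ (with $I=[0,1]$). By the classical theorem that multiplying a nonempty separable metrizable space by a nondegenerate interval raises dimension by exactly one, $\dim(Z\times I)=\dim Z+1=k+1$, and monotonicity then yields $\dim\sum Z\ge\dim(Z\times I)=k+1$. Together with the upper bound this gives $\dim\sum Z=k+1$. The one genuinely nontrivial ingredient is the inequality $\dim(Z\times I)\ge k+1$, and this is where I expect the real work to lie. The clean route is via essential families of partitions: take $k$ pairs of disjoint closed sets $(A_i,B_i)$ in $Z$ forming an essential family that witnesses $\dim Z\ge k$, lift them to $(A_i\times I,\,B_i\times I)$ in $Z\times I$, and adjoin the pair $(Z\times\{0\},\,Z\times\{1\})$; one then verifies that any closed separators $S_1,\dots,S_{k+1}$ for these $k+1$ pairs have $\bigcap_i S_i\neq\emptyset$, since $S_{k+1}$ meets every fibre $\{z\}\times I$ and the essentiality in $Z$ forces a common point. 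I would cite this product-with-interval theorem from a standard reference rather than reprove it in full.
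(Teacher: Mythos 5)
Your proof is correct and follows essentially the same route as the paper's: both split $\sum Z$ into the two poles, handled by cone/cap neighborhoods whose boundaries are copies of $Z$, and the cylinder $Z \times (\text{interval})$, handled by the classical theorem that taking a product with an interval raises dimension by exactly one (cited rather than reproved in both cases). The only cosmetic difference is that you verify the upper bound at interior points by hand via the product inequality and the sum theorem, whereas the paper reads it off directly from the cited equality $\dim\left[Z\times(0,1)\right]=k+1$.
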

\begin{proof} Since $Z$ is compact and $(0,1)$ is one dimensional then by \cite[page 34]{HU-WA}, dim$\,\left[Z \times (0,1)\right] =\,$dim$\,Z +\,$dim$\,(0,1)= k+1$.
So  $Z \times (0,1)$ has dimension $\le k+1$  at each point with equality at at least one point.  Thus $\sum Z$ has dimension $\le k+1$ at each point with  possible exceptions the suspension points $p$ and $n$.

Every neighborhood $U$ of $p$ will contain a cone neighborhood $V$ of $p$  with $\bd V \cong Z$.  Thus the dimension of $\sum Z$ at $p$ is at most  dim$\, Z +1=k+1$ and similarly for
$n$.  Since $\sum Z$ has dimension $\le k+1$ at each point with equality at at least one point, dim$\, \sum Z = k+1$.
\end{proof}

We now prove a result on the fixed point set of the group action on the boundary $\bd X$.
\begin{Lem}\label{L:helper} Let $G$ be a group acting geometrically on a CAT(0) space $X$.  If $G$ has a global fixed point $p$, then there is a closed convex quasi-dense
$\hat X \subset X$ with $\hat X = \R \times Y$ where $Y$ is a closed convex subset of $\hat X$ and $\R$ is an axis of a central element of $G$.
\end{Lem}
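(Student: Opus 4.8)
The plan is to extract from the fixed point $p\in\bd X$ (which cannot lie in $X$, since a geometric action has no global fixed point in the interior) a \emph{central} hyperbolic element $z\in G$; the product splitting and quasi-density then come for free. I would first record two inputs from \cite{BRI-HAE},\cite{SWE}: because the action is geometric, every element of $G$ is semisimple (elliptic or hyperbolic); and if $z\in Z(G)$ is hyperbolic, then $\mathrm{Min}(z)$ is closed, convex, $G$-invariant (as its centralizer $Z_z=G$), and splits as $\R\times Y$ with $\R$ an axis of $z$. Granting such a $z$, the orbit of any point of $\mathrm{Min}(z)$ is $G$-invariant and $D$-dense by cocompactness, so $\hat X=\mathrm{Min}(z)$ is quasi-dense and the lemma follows. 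Thus everything reduces to producing $z$.

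Next I would set up the Busemann cocycle. Fix a Busemann function $b$ centered at $p$. Since $G$ fixes $p$, each $b\circ g^{-1}$ is again a Busemann function at $p$, so $b\circ g^{-1}=b-\beta(g)$ for a unique constant $\beta(g)$, and $\beta:G\to\R$ is a homomorphism. It is nontrivial: if $\beta\equiv 0$ then $b$ is $G$-invariant, hence (being $1$-Lipschitz and constant on a $D$-dense orbit) bounded on $X$, contradicting $b\to-\iy$ along any ray to $p$. Elliptic elements fix a point and so lie in $\ker\beta$; hence any $h$ with $\beta(h)\ne 0$ is hyperbolic, and we normalize $\beta(h)>0$. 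Along the axis $A$ of $h$ the value of $b$ changes by $\beta(h)$ per period $\tau(h)$, so the rate of change of $b$ along $A$ equals $\beta(h)/\tau(h)\in(0,1]$ in absolute value, and equals $1$ exactly when $A$ is asymptotic to $p$.

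Then I would produce an axis actually asymptotic to $p$ and split off its direction. Tracking a ray $\rho$ to $p$ by the cocompact orbit yields elements $g_n$ with $g_nx_0$ within $D$ of $\rho(n)$; comparing Busemann values shows that $|\beta(g_n)|$, $d(x_0,g_nx_0)$, and $\tau(g_n)$ all agree with $n$ up to a bounded error, so the axes $A_{g_n}$ meet a fixed ball about $x_0$ while their endpoints tend to $p$. A properness/limit argument then yields a geodesic line, and ultimately a hyperbolic $h\in G$, whose axis $A$ is asymptotic to $p$ (rate $1$). Passing to the parallel set $\hat X=P(A)$, which by \cite{BRI-HAE} is closed, convex, and isometric to $\R\times Y$ with $\R$ the parallel class of $A$, I would argue that $G$ preserves $P(A)$ together with its product structure: $G$ fixes $p$ and preserves the Busemann rate, hence carries rate-$1$ rays to $p$ to rate-$1$ rays to $p$ and so permutes the lines parallel to $A$; on the $\R$-factor $G$ acts by the translations $\beta(G)$, with no reflection since $p$ is fixed. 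Finally the subgroup $K\le G$ acting trivially on $Y$ consists of pure $\R$-translations and is therefore central; I would show $K$ is nontrivial by using that $G$ acts coboundedly on the noncompact factor $\R$, which forces a genuine translation into $G$. Any nontrivial $z\in K$ is then the required central hyperbolic element, with $\mathrm{Min}(z)=\hat X=\R\times Y$.

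The main obstacle is the third step: converting ``$G$ fixes $p$ cocompactly'' into a genuinely \emph{$G$-invariant metric product} $\R\times Y$ in which $G$ contains a pure translation of the $\R$-factor. Upgrading the limiting line to an actual axis asymptotic to $p$, verifying that $G$ preserves the parallel-set splitting, and—most delicately—showing that the pure-translation subgroup $K$ is nontrivial (equivalently, that $Z(G)$ contains a hyperbolic element) is exactly where the hypothesis that $p$ is globally fixed is used; granting these, the remaining assertions are bookkeeping with the Flat Torus and $\mathrm{Min}$-set structure of \cite{BRI-HAE},\cite{SWE}.
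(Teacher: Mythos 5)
Your reduction is sound and matches the paper's endgame: once a central element $z$ of infinite order is found, it is hyperbolic (point stabilizers of a properly discontinuous action are finite and all elements are semisimple), $\mathrm{Min}(z)$ is closed, convex and $G$-invariant, hence quasi-dense by cocompactness, and \cite[II 6.8]{BRI-HAE} gives the splitting $\R \times Y$. The Busemann homomorphism $\beta$ and its nontriviality are also fine. The genuine gap is in how you propose to produce the central element, and it is not a fixable gap, because both of the claims you defer to the end are false. Consider $G = \Z^2$ acting by integer translations on $X = \E^2$: translations act trivially on $\bd X = S^1$, so every boundary point is a global fixed point; take $p$ to be a direction of irrational slope. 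Then no element of $G$ has an axis asymptotic to $p$ (every axis has rational slope), so your limiting line through $x_0$ with endpoint $p$ exists but is not the axis of any $h \in G$ --- limits of axes of group elements need not be axes. Worse, taking the parallel set of that limit line, $P(A_\infty) = \E^2 = A_\infty \times A_\infty^{\perp}$, the subgroup $K$ of elements acting trivially on the $Y$-factor is trivial: such an element would be a translation in the irrational direction $p$, and $\Z^2$ contains none. So a cobounded action on the $\R$-factor does not force $K \neq 1$. Note that the lemma's conclusion does hold in this example, via $\mathrm{Min}\bigl((1,0)\bigr) = \E^2 = \R \times \R$; the $\R$-factor simply does not point at $p$. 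Insisting that the axis be asymptotic to $p$ is the wrong goal, and it is exactly the step your outline cannot complete.

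The paper's route avoids constructing anything by hand and is where the hypothesis is really spent. Since $G$ acts geometrically it is finitely generated, say by $g_1, \dots, g_k$. By Ruane \cite{RUA}, the fixed-point set on $\bd X$ of each generator satisfies $\fp(g_i) = \Lambda \mathrm{Min}(g_i) = \Lambda Z_{g_i}$, so $p \in \Lambda Z_{g_i}$ for every $i$; by Swenson's limit-set intersection theorem for convex subgroups \cite{SWE}, $p \in \bigcap \Lambda Z_{g_i} = \Lambda\left[\bigcap Z_{g_i}\right] = \Lambda Z_G$. Hence the center $Z_G$ has nonempty limit set, so it is an infinite convex subgroup and contains an element $g$ of infinite order, necessarily hyperbolic; then $\hat X = \mathrm{Min}(g)$ is quasi-dense ($G$-invariant, or via $\fp(g) = \bd X$ as in the paper) and splits as required. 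The idea you were missing is this identification of boundary fixed-point sets with limit sets of centralizers, which converts the dynamical hypothesis that $G$ fixes $p$ into the algebraic conclusion that $Z_G$ is infinite, without ever needing an axis pointing at $p$.
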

\begin{proof} The group $G$ is finitely generated by $g_1, \dots g_k$.  By \cite{RUA}, for each $i$, $p\in \fp(g_i) = \Lambda$Min$\,(g_i)= \Lambda Z_{g_i }$.  By \cite{SWE}
$$p \in \cap \Lambda Z_{g_i} = \Lambda\left[\cap Z_{g_i}\right] =\Lambda Z_G$$ and since $Z_G$ is convex by \cite{SWE}, $Z_G$ contains an element of infinite order $g$.  By \cite{RUA}, $g$ acts trivially on $\bd X$, so $\bd X =\fp(g) =\Lambda$Min$\,(g)$.  We now let $\hat X = $Min$\,(g)$ and apply \cite[II 6.8]{BRI-HAE}.
\end{proof}

\begin{Prop}\label{P:Rua}
Let $X$ be a  CAT(0) space, and $G$ be a group acting geometrically on $X$. The set $A$ of points virtually fixed by $G$ on the boundary is a Tits sphere, and $\bd X = A  * Z$ and $\bd_T X=A  *_S Z$ where $Z$ is a compact subset of $\bd X$.
\end{Prop}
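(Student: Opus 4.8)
\emph{Proof proposal.}
The plan is to show that, after passing to a finite index subgroup of $G$ and replacing $X$ by a quasi-dense closed convex $G$-invariant subset, the space splits as a product $\E^k\times Y$ in which $G$ fixes every point of $\bd\E^k$ while no point of $\bd Y$ is virtually fixed; then $A=\bd\E^k$ is the round Tits sphere $S^{k-1}$, $Z=\bd Y$, and the product formulas for CAT(0) boundaries give $\bd X=A*Z$ and $\bd_TX=A*_SZ$. The whole argument is an induction that peels Euclidean lines off $X$ one at a time using Lemma \ref{L:helper}.

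First I would record that passing to a finite index subgroup does not change $A$, since a finite index subgroup of a finite index subgroup of $G$ is again finite index in $G$. Now suppose $A\neq\emptyset$ and pick $p\in A$, fixed by some finite index $G_1\le G$. Applying Lemma \ref{L:helper} to the action of $G_1$ on $X$ produces a quasi-dense closed convex $G_1$-invariant subset $\hat X=\R\times Y$, where $\R$ is an axis of a central $g\in G_1$ acting trivially on $\bd X$. Because $G_1$ centralizes $g$ it preserves $\mathrm{Min}(g)=\hat X$ together with its splitting, and because $\hat X$ is quasi-dense the action of $G_1$ on it is still geometric. Since $G_1$ fixes the endpoint $p$ of $\R$ it cannot reflect the $\R$-factor, so it acts there by translations and fixes both endpoints $\bd\R=\{p,p'\}$. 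By the product splitting theorem of \cite{BRI-HAE} a finite index subgroup $G_2\le G_1$ respects the splitting, acts geometrically on $Y$, and fixes $\bd\R=S^0$ pointwise.

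Next I would iterate this construction with $(Y,G_2)$ in place of $(X,G_1)$. Each step splits off one more line, enlarging the Euclidean factor from $\E^{j}$ to $\E^{j+1}$; since $\bd\E^{j}=S^{j-1}$ embeds Tits-isometrically into $\bd X$ by Lemma \ref{L:con} (its Tits diameter being $\pi$), and CAT(0) group boundaries have finite topological dimension, the process must terminate, say with $\hat X=\E^k\times Y$. At each stage the newly fixed endpoints are the endpoints of one coordinate axis of the accumulated $\E^k$, and an orthogonal transformation fixing all $k$ coordinate directions is the identity, so the finite index subgroup we end with fixes all of $\bd\E^k=S^{k-1}$ pointwise. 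Termination means Lemma \ref{L:helper} can no longer be applied, i.e.\ no point of $\bd Y$ is virtually fixed. I would then identify $A$ exactly: since $G$ preserves the splitting $\bd X=\bd\E^k*\bd Y$ and fixes $\bd\E^k$, a virtually fixed join point $(u,v,t)$ with $t>0$ forces its $\bd Y$-coordinate $v$ to be virtually fixed, which is impossible; hence $A\subseteq\bd\E^k$, while every point of $\bd\E^k$ is fixed by the translation lattice and so lies in $A$. Thus $A=\bd_T\E^k=S^{k-1}$ is a Tits sphere, and with $Z=\bd Y$ the product formulas $\bd(\E^k\times Y)=\bd\E^k*\bd Y$ and $\bd_T(\E^k\times Y)=\bd_T\E^k*_S\bd_TY$, together with quasi-density (so that $\bd X=\bd\hat X$ with the same Tits boundary), give $\bd X=A*Z$ and $\bd_TX=A*_SZ$.

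The main obstacle is the bookkeeping inside the peeling step: guaranteeing that after splitting off $\R$ the residual group genuinely acts geometrically on $Y$, and that the accumulated splitting $\E^k\times Y$ is preserved by a single finite index subgroup, so that one can simultaneously arrange all of $\bd\E^k$ to be fixed while verifying that $\bd Y$ has no virtually fixed point. This is exactly where the product splitting theorem of \cite{BRI-HAE} and the coherence of the chosen finite index subgroups (using that $A$ is insensitive to passing to finite index) must be combined carefully.
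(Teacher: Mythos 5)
Your overall strategy mirrors the paper's own proof: the paper also repeatedly splits off a line using Lemma \ref{L:helper} and combines the resulting join decompositions of the boundary; it merely organizes the peeling as a downward induction on $\dim \bd X$ (via Lemma \ref{L:dim}) instead of an upward iteration, and it describes $A$ as an iterated spherical suspension rather than identifying it with $\bd_T\E^k$. That difference is cosmetic. However, your version of the peeling step contains a genuine gap.

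You assert that a finite index subgroup $G_2 \le G_1$ ``respects the splitting, acts geometrically on $Y$.'' No finite index subgroup of $G_1$ can act geometrically on $Y$ through the projected action: any such subgroup contains a positive power $g^m$ of the central element $g$ produced by Lemma \ref{L:helper}, and $g^m$ acts on $\hat X=\R\times Y$ as a translation of the $\R$-factor and the \emph{identity} on $Y$; hence every point stabilizer of the projected $G_2$-action on $Y$ is infinite and the action is not proper. This is precisely why the paper, following \cite{SWE} (quoted in its preliminaries: $\mathrm{Min}(g)=A\times Y$ with $Z_g/\langle g\rangle$ acting geometrically on $Y$), takes the group acting on $Y$ to be the \emph{quotient} $G_1/\langle g\rangle$, not a subgroup. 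Your appeal to the splitting theorem of \cite{BRI-HAE} can be made to work --- a central $\Z$ is virtually a direct factor, giving $G_2=\langle g\rangle\times H$, and one can check that $H$ acts geometrically on $Y$ --- but then $H$ has \emph{infinite} index in $G$, and this breaks your bookkeeping: you justify transferring ``virtually fixed by $G$'' to ``virtually fixed by the group acting on $Y$'' by noting that $A$ is insensitive to passing to finite index subgroups, which does not apply to $H$ (or to $G_1/\langle g\rangle$). The transfer is still true, but it needs the additional fact, stated in the proof of Lemma \ref{L:helper} and never invoked by you, that $g$ acts trivially on $\bd X$: if $q\in\bd Y$ is fixed by a finite index $G'\le G$, write $x\in G'\cap G_2$ as $x=g^mh$; since $g$ fixes $q$, so does $h$, so the finite index image of $G'\cap G_2$ in $H$ fixes $q$; conversely $\langle g\rangle\times H'$ is finite index in $G$ for any finite index $H'\le H$ fixing $q$. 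A secondary slip: you rule out reflections of the $\R$-factor because ``$G_1$ fixes the endpoint $p$ of $\R$,'' but the virtually fixed point $p$ you started from need not be an endpoint of the axis $\R$; the correct reason is centrality of $g$ (an element reflecting $\R$ would conjugate $g$ to an isometry translating $\R$ in the opposite direction).
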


\begin{proof}
If $A$ is non-empty, then passing to a subgroup of finite index, we may assume that the set of global fixed points of $G$ is non-empty.   By  Lemma \ref{L:helper}  there exists a hyperbolic element $h \in Z_G$ with endpoints $\{n,p\} \subset A$, and $X$ contains a quasi-dense subspace which is a product of an axis of $h$ with $X_1$,  where $X_1$ is a
 closed convex subspace on which $G/\langle h \rangle$ (see \cite{SWE}) acts geometrically.  Thus  $\bd X\cong\{n,p\}  * \bd X_1= \sum \bd X_1$, and $\bd_T X=\{n,p\} *_S\bd_TX_1= \sum_S \bd_TX_1$.

Suppose dim$\,\bd X=k$ ( $<\iy$ by \cite{SWE}). We  proceed by induction on $k$. For $k=0$, either $A =\varnothing$, or  $\bd X$ is a 0-sphere, because the 0-sphere is the only 0-dimensional space which is a suspension (of the empty set). In the latter case, $A=\bd X$ is a 0-sphere.

 Assume the result holds for dimension $k-1$. Let $\bd X$ be $k$-dimensional. If $A$ is empty, there is nothing to prove; if not, then $X$ contains a quasi-dense subspace $\mathbb R \times X_1$, with $\bd X=\{n,p\} *\bd X_1$ and $\bd_T X=\{n,p\}   *_S \bd_T X_1$.

 Since $\bd X_1 \subset \bd X$,  $\bd X_1$ is finite dimensional and by Lemma \ref{L:dim}, dim$\,\bd X_1=k-1$. Applying the result to $X_1$ with geometric action by $G/\langle h \rangle$, then the set $A_1$ of all points virtually fixed by $G/\langle h \rangle$ on $\bd X_1$ is a Tits sphere.  Also $\bd X_1=A_1* Z_1$ and $\bd_T X_1=A_1*_S Z_1$where $Z_1$ is a compact subset of $\bd X_1$. Any point virtually fixed by $G$ in $A-\{n,p\}$  lies on a suspension arc  through a unique point $q$ in $\bd X_1$. Thus $q$ is virtually fixed by $G$, and also by $G/\langle h \rangle$, so $q\in A_1$. It follows that $A$ is the spherical join of  $\{n,p\}$ with $A_1$, and so $A$ is a Tits sphere in $\bd_T X$ and $$\bd X = \{n,p\}*(A_1 *Z_1)= [\{n,p\} *A_1]*Z_1=  A*Z_1$$  with the same equalities for the spherical joins.
\end{proof}

\begin{Cor}\label{C:decomp}
Let $X$ be a  CAT(0) space, and $G$ be a group acting geometrically on $X$. Suppose that $\{n,p\}$ are points on $\bd X$ such that all homeomorphisms of $\bd X$  stabilize $\{n,p\}$, then the points $n$ and $p$ are the only virtually fixed points of $G$, and
there is a closed convex $Y\subset X$  and $R$ a geodesic line in $X$ satisfying:
\begin{itemize}
\item $R$ is a line from $n$ to $p$;
\item There is a closed convex quasi-dense subset $\hat X \subset X$ with $\hat X$ decomposing as $Y \times R$;
\item The CAT(0) space $Y$ admits a geometric action.
\end{itemize}

\end{Cor}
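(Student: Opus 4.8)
The plan is to deduce this corollary directly from Proposition \ref{P:Rua} by verifying that its hypotheses force $A = \{n,p\}$, and then to read off the product decomposition from the machinery already assembled in Lemma \ref{L:helper} and the structure of $\text{Min}(g)$ for a hyperbolic $g \in Z_G$. The key observation is that ``all homeomorphisms of $\bd X$ stabilize $\{n,p\}$'' is a very strong hypothesis: any $g \in G$ acts on $\bd X$ by a homeomorphism (in the visual topology), and the whole group of self-homeomorphisms certainly includes the $G$-action, so in particular $G$ stabilizes $\{n,p\}$. Passing to the index-at-most-$2$ subgroup fixing each of $n$ and $p$, both $n$ and $p$ become \emph{global} fixed points, hence certainly virtually fixed points of $G$.

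First I would apply Proposition \ref{P:Rua} to write $\bd X = A * Z$ and $\bd_T X = A *_S Z$, where $A$ is the Tits sphere of virtually fixed points. The task is then to show $A = \{n,p\}$, i.e.\ that $A$ is a $0$-sphere. Suppose instead $\dim A \ge 1$, so $A$ contains a positive-dimensional Tits sphere. Since $A$ is a sphere, the orthogonal involution of $A$ (the antipodal map) extends to a self-homeomorphism of the join $A * Z = \bd X$ that moves points of $A$ arbitrarily within $A$; more carefully, when $A$ is a sphere of dimension $\ge 1$ one can exhibit a homeomorphism of $\bd X$ carrying the designated pair $\{n,p\} \subset A$ off itself, contradicting the hypothesis that every homeomorphism stabilizes $\{n,p\}$. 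Hence $\dim A = 0$, and since $A$ is a sphere that arises (by the proof of Proposition \ref{P:Rua}) as a suspension, it is a $0$-sphere; as $n,p$ are virtually fixed they lie in $A$, so $A = \{n,p\}$ and these are exactly the virtually fixed points.

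With $A = \{n,p\}$ in hand, the product decomposition is immediate from the proof of Proposition \ref{P:Rua} (equivalently Lemma \ref{L:helper}): there is a hyperbolic central element $h \in Z_G$ with $\{h^-, h^+\} = \{n,p\}$, and by \cite[II 6.8]{BRI-HAE} the space $X$ contains a closed convex quasi-dense subset $\hat X$ splitting as $\hat X = Y \times R$, where $R$ is an axis of $h$ and $Y$ is the closed convex complementary factor on which $G/\langle h\rangle$ acts geometrically. Setting $R$ to be this axis gives a line from $n = h^-$ to $p = h^+$, $\hat X = Y \times R$ is the required quasi-dense convex decomposition, and $Y$ admits the geometric action of $G/\langle h\rangle$, establishing all three bullet points.

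The main obstacle I anticipate is the middle step: rigorously producing, from the assumption $\dim A \ge 1$, an explicit self-homeomorphism of $\bd X = A * Z$ that fails to stabilize $\{n,p\}$. One must check that a homeomorphism of the sphere $A$ moving $\{n,p\}$ extends across the join to a homeomorphism of the whole visual boundary (rather than merely of $A$), and that this extension respects the join structure $A * Z$ well enough to remain a homeomorphism of $\bd X$. Everything else is essentially bookkeeping on top of Proposition \ref{P:Rua}.
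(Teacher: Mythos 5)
Your proposal is correct and follows essentially the same route as the paper: observe that $G$ acts by homeomorphisms so $n,p$ are virtually fixed, invoke Proposition \ref{P:Rua} to get the sphere $A$ with $\bd X = A*Z$, rule out $A \neq \{n,p\}$ by joining a homeomorphism of the sphere $A$ that moves $\{n,p\}$ with the identity on $Z$, and then read the splitting $\hat X = Y \times R$ off Lemma \ref{L:helper}. The only point the paper spells out that you leave implicit is why the axis $R$ runs from $n$ to $p$: its endpoints are virtually fixed by $G$, hence lie in $A = \{n,p\}$.
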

\begin{proof}
$G$ virtually fixes the points $n$ and $p$, so $n,p \in A$, the sphere of points virtually fixed by $G$
and $\bd X = A *Z$ for some closed subset of $Z$ of $\bd X$.   If $A \neq \{n,p\}$, then  since any
homeomorphism of $A$ with the identity map on $Z$ induces an homeomorphism on their join, there would be homeomorphisms of $\bd X$ that do not stabilize $\{n,p\}$, which contradicts the assumption. So
$A = \{n,p\}$ as required.  By Lemma \ref{L:helper} we get a closed convex quasi-dense $\hat X \subset X$ where $\hat X = Y \times R$ where $R$ is the axis of a central $g \in Z_G$.  Notice since $n$ and $p$ are the only virtually fixed points of $G$ and the endpoints of $R$ will be virtually fixed by $G$, then $n$ and $p$ are the endpoints of $R$.  Also $G/\langle g\rangle$ will act geometrically on $Y$ by \cite{SWE}.
\end{proof}
We need the following characterization of an arc.
\begin{Thm}[Moore] \label{T:Moore} Let $A$ be a compact connected metric space.  If $A$ has exactly two non cut points, then $A$ is an arc.\end{Thm}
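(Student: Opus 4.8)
The plan is to recover $A$ as a linearly ordered continuum from its cut-point structure, with the two non-cut points $a$ and $b$ serving as endpoints, and then to invoke the classical characterization of $[0,1]$ as the unique separable, densely ordered, Dedekind-complete linear order with a least and a greatest element. So the whole proof is organized around building a total order $\prec$ on $A$ and showing that its order topology is exactly the given metric topology.

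First I would record the standard continuum-theoretic fact that every side of a cut point carries a non-cut point of the whole space: if $c$ is a cut point and $A\setminus\{c\}=U\cup V$ is a separation into nonempty open sets, then $U\cup\{c\}$ and $V\cup\{c\}$ are subcontinua, each nondegenerate and hence with at least two non-cut points, and one checks that a non-cut point of $U\cup\{c\}$ other than $c$ is already a non-cut point of $A$. Since $A$ has \emph{only} the two non-cut points $a$ and $b$, this forces $a$ and $b$ onto opposite sides of every cut point. In particular every cut point separates $a$ from $b$, and for each cut point $c$ we obtain a distinguished $a$-side $A_c$ and $b$-side $B_c$, both open in $A$ (openness being automatic, since a separation is by definition a pair of open sets and $A\setminus\{c\}$ is open in $A$).

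Next I would define the order: declare $a$ least and $b$ greatest, and for cut points $x,y$ set $x\prec y$ precisely when $x$ separates $a$ from $y$ (equivalently $y\in B_x$). The real work is verifying that $\prec$ is a genuine total order. Trichotomy and transitivity both reduce to separation statements — for transitivity one shows that if $x$ separates $a$ from $y$ and $y$ separates $a$ from $z$, then $x$ separates $a$ from $z$, by tracking on which side of each cut point the connected set carrying $a$ must sit; along the way one checks the consistency statement that $\{z:z\prec c\}=A_c$ and $\{z:c\prec z\}=B_c$. I would then establish density of the order (a gap $x\prec y$ with nothing strictly between would split $A$ into two relatively open pieces, contradicting connectedness) and Dedekind completeness (a bounded monotone net has a cluster point by compactness, and connectedness identifies it as the supremum).

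Finally I would prove that the order topology coincides with the metric topology. One inclusion is free: the rays $A_c,B_c$ are metric-open by the first step, so order-open sets are metric-open; for the reverse I would show that every metric-open set is a union of order-intervals, using that the family $\{A_c,B_c\}$ separates points together with compactness. Once the two topologies agree, $A$ is a compact, connected, separable, densely ordered, Dedekind-complete linear order with exactly two endpoints, hence order-isomorphic, and therefore homeomorphic, to $[0,1]$ by Cantor's order characterization. I expect the main obstacle to be precisely the order-theoretic bookkeeping of the third paragraph: establishing trichotomy and transitivity of $\prec$ cleanly, and, above all, matching the order topology to the metric topology without any local-connectedness hypothesis. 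It is the openness of the sides $A_c,B_c$ — the genuine payoff of the non-cut-point lemma — that makes this last identification go through.
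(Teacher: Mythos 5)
The paper does not prove this statement at all: Theorem \ref{T:Moore} is quoted as a classical result of R.~L.~Moore and used as a black box in the proof of Lemma \ref{L:isotop}, so there is no argument of the authors to compare against. Your outline is essentially the standard textbook proof of Moore's arc characterization (as in Hocking--Young or Wilder), and it is correct. The three pillars are all in place: (i) the non-cut-point existence theorem (every nondegenerate continuum has at least two non-cut points) applied to the subcontinua $U\cup\{c\}$ and $V\cup\{c\}$, which forces $a$ and $b$ onto opposite sides of every cut point and makes the rays $A_c$, $B_c$ open --- note this existence theorem is itself a nontrivial classical input, doing real work in your first step; (ii) the separation order, whose trichotomy and transitivity follow from the nesting argument you indicate (a connected set containing $a$ and missing a cut point $c$ lies entirely in $A_c$); (iii) the topology comparison, where your last step can be shortcut: once order-open sets are metric-open, the identity map from the compact metric topology to the Hausdorff order topology is a continuous bijection, hence a homeomorphism, and compactness of the order topology then yields Dedekind completeness for free, rather than via a separate net argument. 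Two small points to make explicit in a written version: every point of $A\setminus\{a,b\}$ is a cut point (this is where ``exactly two'' is used, and it is what makes $\prec$ defined on all of $A$), and the order-separability needed for Cantor's characterization follows from metric separability together with order density, since any nonempty open order interval is metric-open and so meets a countable topologically dense set.
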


\begin{Lem}\label{L:isotop} Let $Y$ be the join of two cantor sets $C_1$ and $C_2$. Then the suspension point set $\{n,p\}$ is preserved by homeomorphisms of $\sum Y$.
If $\sum Y$ is also a suspension of a subspace $Z$, then $n$ and $p$ are the $Z$-suspension points as well and  $Z$ is isotopic to $Y$ in $\sum Y$.
\end{Lem}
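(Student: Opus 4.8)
The plan is to single out $\{n,p\}$ by a purely topological local property, and then to exploit the two suspension structures to move $Z$ onto $Y$.

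First I would identify the poles as the \emph{locus of local connectedness}. Writing the suspension as an iterated join, $\sum Y\cong C_1*C_2*\{n,p\}$, every point $z\notin\{n,p\}$ has positive join-weight on $C_1$ or on $C_2$; say the weight on $C_1$ is positive. On a neighborhood of $z$ this weight stays bounded away from $0$, so the $C_1$-coordinate defines a continuous map $\pi_1$ into the Cantor set $C_1$. Since $C_1$ is perfect, every neighborhood of $z$ contains points whose $C_1$-coordinate differs from that of $z$, so $\pi_1$ is nonconstant on each neighborhood; as $C_1$ is totally disconnected, pulling back a clopen partition separating two values of $\pi_1$ disconnects the neighborhood. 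Hence $\sum Y$ fails to be locally connected at every $z\notin\{n,p\}$. At $n$ and $p$, by contrast, the sets $\{\,\text{join-weight on }\{n,p\}>1-\epsilon\,\}$ are open cones on the connected space $Y$, hence connected, and form a neighborhood basis; so $\sum Y$ is locally connected exactly at $n$ and $p$. Local connectedness at a point is a homeomorphism invariant, so every homeomorphism of $\sum Y$ permutes $\{n,p\}$, which is the first assertion. The same invariant applies to any second suspension structure $\sum Z$: its two poles are points of local connectedness of $\sum Z\cong\sum Y$, hence lie in $\{n,p\}$, and being two distinct points they must be all of $\{n,p\}$. Thus $n$ and $p$ are the $Z$-suspension points as well.

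For the isotopy I would first record that $\dim Z=1$: by Lemma \ref{L:dim}, $\dim\sum Z=\dim Z+1$, while $\dim\sum Z=\dim\sum Y=2$. Working in $S:=\sum Y=\sum Z$, I use the product structure coming from the $Y$-suspension, $S\setminus\{n,p\}\cong Y\times(0,1)$, with $Y$ the level $Y\times\{1/2\}$ and $n,p$ the two ends; and I use that $Z$, being a suspension equator with poles $n,p$, is a compact set separating $S$ into two open cones $U_n\ni n$ and $U_p\ni p$ with common frontier $Z$ and a bicollar $Z\times(-1,1)$. The goal is an ambient isotopy of $S$, fixing $n$ and $p$, carrying $Z$ onto the level $Y$ (which in particular yields $Z\cong Y$).

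The naive first attempt, projecting $Z$ to $Y$ along the $Y$-suspension arcs, is exactly where the main difficulty lies: although each $Y$-suspension arc runs from $n\in U_n$ to $p\in U_p$ and therefore meets the separator $Z$, there is no formal reason for it to meet $Z$ in only one point, since the $Y$- and $Z$-product structures on $S\setminus\{n,p\}$ need not align; an arc may dip from $Z$ into a single cone and return, meeting $Z$ repeatedly. So instead I would build the isotopy from the collar of $Z$, using the $Z$-suspension parameter as a proper height function $h_Z\colon S\to[0,1]$ with $h_Z^{-1}(1/2)=Z$ and $h_Z^{-1}\{0,1\}=\{p,n\}$, and straightening $h_Z$ against the $Y$-product coordinate so that the level $1/2$ is swept onto $Y\times\{1/2\}$, sliding points along the $(0,1)$-fibres of the $Y$-structure and tapering to the identity near the poles. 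Here Theorem \ref{T:Moore} is the tool that keeps the one-dimensional cross-sections under control, identifying the relevant pieces (such as intersections of suspension arcs with cone regions) as genuine arcs, which is what lets the straightening be carried out continuously and extended over $n$ and $p$. Verifying that this straightening is a well-defined ambient isotopy, reconciling the two a priori unrelated product structures on $S\setminus\{n,p\}$, is the technical heart of the argument; once it is in place, $Z$ is carried onto $Y$ and the two are isotopic in $\sum Y$.
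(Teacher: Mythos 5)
Your first assertion is proved correctly, and by essentially the same method as the paper: the poles are characterized as the only points of local connectedness of $\sum Y$ (the paper partitions $\sum Y$ by local topology; your perfect-set/totally-disconnected argument for the failure of local connectedness off $\{n,p\}$ is a clean way to run that step), and homeomorphism invariance then forces any second suspension structure to have the same poles.

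The isotopy half, however, has a genuine gap, and you have in effect conceded it. You correctly identify the central difficulty --- a $Y$-suspension fibre could a priori meet $Z$ in many points, so fibrewise projection of $Z$ onto $Y$ is unjustified --- but your proposed remedy, ``straightening $h_Z$ against the $Y$-product coordinate, sliding points along the $(0,1)$-fibres,'' is that same fibrewise projection in different words: to sweep $h_Z^{-1}(1/2)=Z$ onto the level $Y\times\{1/2\}$ by sliding along $Y$-fibres you need precisely the single-intersection (or at least coherently chosen intersection) property you admitted is unavailable, and you explicitly defer ``the technical heart.'' Moore's theorem cannot be invoked on unspecified ``cross-sections''; one must exhibit a specific compact connected set with exactly two non-cut points. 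What is missing is a step that ties the two suspension structures together \emph{before} any straightening, and this is what the paper supplies. First, the finer invariant partition of $\sum Y-\{n,p\}$ into $\cC$ (points on open suspension arcs through $C_1\cup C_2$, where small neighborhood components contain a tri-plane) and $\cD$ (points with locally planar components) is preserved, and since the points of one open $Z$-suspension arc are related by homeomorphisms sliding along that arc, each open $Z$-arc lies entirely in $\cC$ or entirely in $\cD$; an open $Z$-arc inside a component of $\cC$ (a single open $Y$-arc) must equal it, so the $Z$-suspension arcs through the Cantor set points \emph{coincide} with the $Y$-suspension arcs. This pins $Z$ to the square decomposition of $\sum Y$ into disks $D=\sum\beta$, $\beta$ a join arc from $c_1\in C_1$ to $c_2\in C_2$: the two sides of $D$ each meet $Z$ in one point $z_i$, the set $\omega=Z\cap D$ satisfies $D=\sum\omega$, is connected, every interior point of $\omega$ lies on a $Z$-arc from $n$ to $p$ inside $D$ which separates $D$ (so is a cut point of $\omega$), while $z_1,z_2$ are non-cut points; only now does Theorem \ref{T:Moore} apply to show $\omega$ is an arc, after which the ambient isotopy is assembled canonically square by square. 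Without the arc-coincidence step you have no control whatsoever on how $Z$ sits relative to the $Y$-product structure, and no straightening scheme can get started. (Your observation via Lemma \ref{L:dim} that $\dim Z=1$ is correct but does not substitute for any of this.)
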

\begin{proof}  The suspension arcs of $\sum Y$ will be called $Y$-suspension arcs.  Similarly we will call the suspension arc of the $Z$ suspension structure $Z$-suspension arcs.
We partition $\sum Y$ by the local topology.
\begin{itemize}
\item The suspension points $\{n,p\}$ which have a neighborhood basis consisting of cone neighborhoods (cones on $Y$ of course), so $\sum Y$ is locally connected at $n$ and $p$.
\item$\cC=[\sum C_1\cup \sum C_2]-\{n,p\}$ (the union of the open $Y$-suspension arcs running through $C_1$ and $C_2$). $\sum Y$ is not locally connected at these points. For $p\in \cC$ and  $U$ a neighborhood of $p$,  the component of $U$ containing $p$ is never a 2 manifold (it always contains a topological tri-plane)
\item $\cD =\sum Y -[\sum C_1\cup \sum C_2]$. $\sum Y$ is not locally connected at these points.   For $p\in \cD$, for $U$ a sufficiently small neighborhood of $p$, the component of $p$ in $U$ will be homeomorphic to an open subset of a disk.
\end{itemize}
  This means that the suspension points of the $Z$-suspension are also $\{n,p\}$ and that this set is fixed by every homeomorphsim of $Y$.

Since we can isotop up and down suspension arcs, if $\alpha$ is an open $Z$-suspension arc and $\alpha  \cap \cC \neq \emptyset$, then $\alpha \subset \cC$. (Similarly if $\alpha \cap \cD \neq \emptyset$ then $\alpha \subset \cD$.) It follows that for each $c \in C_1\cup C_2$, the $Y$-suspension arc through $c$ will be a $Z$-suspension arc as well.  Let $c_1 \in C_1$ and $c_2\in C_2$.  Let $\beta \subset C_1*C_2$ be the join arc from $c_1$ to $c_2$.
The disk $D=\sum \beta \subset \sum (C_1*C_2)=Y$ has boundary $\alpha_1$ and $\alpha_2$, the ($Y$ and $Z$)-suspension arcs throughout $c_1$ and $c_2$ respectively.  Let $\omega = Z \cap D$.   Since $\alpha_i$ is a $Z$-suspension arc, $\alpha_i \cap \omega$ is a single point $z_i$ (for $i=1,2$).
We will show that $\omega$ is an arc by showing that $\omega$ is connected and that $z_1,z_2$ are the only non cut points of $\omega$.

Open $Z$-suspension arcs are disjoint, so $D = \sum \omega$.  Since $D-\{n,p\}\cong \omega \times I$ (where $I$ is a open interval) is connected, $\omega$ is connected.  Notice that
$\sum [\omega- \{z_i\}]-\{n,p\} \cong [D-\alpha_i]$ which is connected and so $\omega -\{z_i\}$ is connected.
Let $z\in \omega$ with $z \neq z_i$ for $i=1,2$.  Thus $z \in\, $Int$\, D$ and so the $Z$-suspension arc $\gamma\subset D$  and
 $\gamma \cap \bd D = \{n,p\}$ since $\gamma$ cannot cross $\alpha_i$.  It follows that $D- \gamma$ is not  connected.
 Since $D-\gamma \cong [\omega-\{z\}] \times I$  it follows that $\omega -\{z\}$ is not connected.  Thus $z$ is a cut point of $\omega$,
 and by Theorem \ref{T:Moore}, $\omega$ is an arc.

 Notice that $D$ admits a PL structure as a square with vertices  $n,c_1,p,c_2$ and so that the map from $\cl {c_i n}$ and $\cl{c_i,p} $ into $D$ is an isometry, and we can do this in a canonical way for all such $D$.
 We isotope $\omega$ to  the line segment $\hat \omega$ in $D$ from $z_1$ to $z_2$ with the isotopy fixing $\bd D$.  We can do this for each such $D$ at the same time.  We call the image $Z$ under this isotopy $\hat Z$ which is a union of straight line segments in each of our squares.

 Now for each $c \in C_1\cup C_2$, with $z$ be the unique point of $Z$ on the $Y$-suspension arc $\alpha$ through $c$, we choose an isotopy of $\alpha$ fixing $n$ and $p$ which takes $z$ to $c$.  We do these simultaneously and extend linearly on corresponding squares.  This gives us an isotopy from $\hat Z$ to $Y$ in $\sum Y$.
\end{proof}

\begin{Thm} The suspension of the join of two cantor sets is Tits rigid.
\end{Thm}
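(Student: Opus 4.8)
The plan is to reduce this to the rigidity of the join already established in Theorem \ref{T:join} by peeling off the suspension as a Euclidean line factor. Suppose $Z_1$ and $Z_2$ are CAT(0) group boundaries, each homeomorphic to $\sum(C_1*C_2)=\sum Y$ with $Y=C_1*C_2$; write $Z_j=\bd X_j$ where $G_j$ acts geometrically on a proper CAT(0) space $X_j$. It suffices to produce, for each $j$, a boundary isomorphism from the fixed model space $\sum_S(C_1*_S C_2)$ (spherical suspension of the spherical join of two uncountable discrete sets) onto $\bd_T X_j$; composing one with the inverse of the other then yields the boundary isomorphism $Z_1\to Z_2$. So I would fix $j$, drop it from the notation, and work with a single $\bd X\cong\sum Y$.

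First I would invoke Lemma \ref{L:isotop}: since $Y$ is the join of two Cantor sets, the suspension points $\{n,p\}$ are preserved by every self-homeomorphism of $\bd X$. Hence Corollary \ref{C:decomp} applies to this pair, producing a central element $g\in Z_G$ whose axis $R$ is a line from $n$ to $p$, and a closed convex quasi-dense subset $\hat X=Y_0\times R\subset X$ on which $Y_0$ carries a geometric action of $G/\langle g\rangle$ (I rename their $Y$ to $Y_0$ to avoid clashing with $Y=C_1*C_2$).

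Next I would transfer the boundary computation through this splitting. Because $\hat X$ is closed, convex and quasi-dense, $\Lambda\hat X=\bd X$, and since $\diam\,\bd_T\hat X=\pi$, Lemma \ref{L:con} gives $\bd\hat X=\bd X$ visually together with a boundary isomorphism $\bd_T\hat X\cong\bd_T X$. For the product $\hat X=Y_0\times R$, the standard join decompositions (as used in Proposition \ref{P:Rua}) read
$$\bd\hat X=\{n,p\}*\bd Y_0=\sum\bd Y_0,\qquad \bd_T\hat X=\{n,p\}*_S\bd_T Y_0=\sum_S\bd_T Y_0,$$
with $d_T(n,p)=\pi$. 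Comparing the visual identity $\sum\bd Y_0\cong\sum Y$ and applying Lemma \ref{L:isotop} once more shows $\bd Y_0$ is homeomorphic to $Y=C_1*C_2$. As $Y_0$ is a proper CAT(0) space carrying a geometric action, Theorem \ref{T:join} and its proof furnish a boundary isomorphism $\Phi_0:C_1*_S C_2\to\bd_T Y_0$, where $C_1,C_2$ carry the discrete metric with all distinct distances equal to $\pi$.

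Finally I would assemble $\Phi_0$ into a map $\Phi:\sum_S(C_1*_S C_2)\to\bd_T X$ by sending the suspension arc over each $\xi\in C_1*_S C_2$ isometrically onto the suspension arc over $\Phi_0(\xi)$ and fixing $n,p$. Since the spherical-suspension metric depends only on the base distances and the latitudes, and $\Phi_0$ preserves base distances, $\Phi$ is a Tits isometry by construction. The remaining work, and the main obstacle, is to verify that $\Phi$ is also a homeomorphism in the \emph{visual} topology, i.e.\ that the visual suspension structure from $\sum\bd Y_0$ matches the arc structure of $\sum_S$. This I would handle exactly as the continuity argument in the proof of Theorem \ref{T:join}: $\Phi$ is a bijection of compacta, so only continuity need be checked, and for a visually convergent sequence one uses lower semi-continuity of the Tits metric together with Theorem \ref{T:omega} to pin every cluster point onto the correct suspension arc at the correct height. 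Thus each $\bd_T X_j$ is boundary isomorphic to $\sum_S(C_1*_S C_2)$, and composing gives $Z_1\cong Z_2$.
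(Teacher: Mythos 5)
Your proposal is correct and takes essentially the same route as the paper: Lemma \ref{L:isotop} to pin down the suspension points, Corollary \ref{C:decomp} to split off a quasi-dense product $Y_0\times R$, Lemma \ref{L:con} to transfer boundaries, Lemma \ref{L:isotop} again to recognize $\bd Y_0$ as the join of two Cantor sets, Theorem \ref{T:join} (via its proof) for the base boundary isomorphism, and then suspension of that map. The only difference is cosmetic: the visual-continuity check you flag as \emph{the main obstacle} is immediate, since the visual boundary of the product $Y_0\times R$ already carries the join topology $\bd Y_0 * \{n,p\}$ and the suspension of a homeomorphism is a homeomorphism, which is why the paper needs no analogue of your closing argument.
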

\begin{proof}Let $G$ be a group acting geometrically on the CAT(0) space $X$ with $\bd X \cong \sum [C_1 * C_2]$ where $C_1$ and $C_2$ are Cantor sets.  We will show that there is a isometry from $\iota:\bd_TX\to\sum_S [C_1 *_S C_2]$  such that $\iota$ is a homeomorphsim from $\bd X$ to  $ \sum [C_1 * C_2]$.

By Lemma \ref{L:isotop} every homeomorphism of $\bd X$ fixes the suspension point set $\{n,p\}$.
Thus by
Corollary \ref{C:decomp}, there exits closed convex quasi-dense $\hat X \subset X$ with $\hat X = Y \times R$ where $Y$ is closed and convex in $X$ and
$R$ is a geodesic line from $n$ to $p$. Also $Y$ admits a geometric action.  Now $\bd X = \sum \Lambda Y$ and $\bd_T X = \sum_S \Lambda Y$.  By  Lemma  \ref{L:isotop}, $\Lambda Y$ is the join of two cantor sets.  By Lemma \ref{L:iota}, $\bd Y$ is is the join of two cantor sets.  By Theorem \ref{T:join},  $\bd_TY$ is the spherical join of two cantor sets $B_1$ and $B_2$ where $\bd Y$ is the topological join of $B_1$ and $B_2$.
By Lemma \ref{L:iota},  in the restriction Tits metric, $\Lambda Y\cong B_1*_SB_2$.
Thus $\bd_T X $ is isometric to  $\sum_S B_1*_SB_2$ and this gives $\iota$ as required.

\end{proof}

\section{Further questions}
We would recklessly conjecture that a boundary is Tits rigid if and only if it doesn't have a circle as a join factor.  Clearly the circle is not Tits rigid and all  higher dimensional spheres will have a circle as a join factor.  Thus every known non-Tits rigid space has a circle as a join factor.
Furthermore if we have a boundary $Z\cong S^1 *Y$  where $Y$ {\em is also a boundary}, we see taking products that  $Z$ will not be Tits rigid.

The first step is to prove more boundaries that are Tits rigid. Possible candidates include the $n$-fold join of Cantor sets and their suspensions, or more generally, boundaries of CAT(0) cube complexes with certain properties. Visual boundaries of universal covers of Salvetti complexes of right-angled Artin groups may be a source of examples, because the known examples can be realized as such.  We also imagine that spherical buildings are Tits rigid, but have not examined this at all.

The known Tits rigid boundaries have proper closed invariant subsets, except the Cantor set and the set with two points. This may be a common property for other Tits rigid boundaries. For those that do have closed invariant subsets, is it true that a Tits rigid boundary always has some closed invariant subset which is also Tits rigid with the induced topology?

Also, for a Tits rigid boundary that is not a suspension, is the suspension of this boundary is also Tits rigid? Corollary \ref{C:decomp} is a partial result. The difficulty lies in the fact that there may be non-homeomorphic topological spaces with homeomorphic joins, an example is given by the Double Suspension Theorem of Cannon and Edwards.

In every known Tits rigid boundary, the topology of the Tits boundary resembles the visual topology in the best possible way, i.e. all the paths in the visual topology are still paths in the Tits boundary. We suspect that this may be the case for other Tits rigid boundary as well, although this should become much harder to show even for particular cases when the dimension of the visual boundary is at least two.

Recall that a CAT(0) group is rigid if it corresponds to a unique visual boundary up to homeomorphism.  There might be non-rigid CAT(0) groups corresponding to some Tits rigid boundaries. Boundaries of known  non-rigid CAT(0) groups are similar, in the sense that they are shape equivalent by a result of Bestvina. If one of them is Tits rigid, should every other visual boundary of the same group be Tits rigid too because of their similarity?

\end{document}
\bye